\newcommand{\nin}{\not\in}
\newcommand{\pr}{\mathbf P}
\newcommand{\e}{\mathbf E}
\newcommand{\Ima}{\mathrm{Im}\,}
\newcommand{\Coim}{\overline{\mathrm{Im}}\,}
\title{Packing spanning graphs from separable families}
\author{Asaf Ferber
\thanks{Department of Mathematics, Yale University, New Haven, CT 06520, and
Department of Mathematics, MIT, Cambridge, MA 02139-4307. Emails:
asaf.ferber@yale.edu, and ferbera@mit.edu.}
\and
Choongbum Lee
\thanks{Department of Mathematics,
MIT, Cambridge, MA 02139-4307. Email: cb\_lee@math.mit.edu.
Research supported by NSF Grant DMS-1362326.}
\and
Frank Mousset
\thanks{Department of Computer Science, ETH Zürich, 8092 Zürich, Switzerland.
Email: frank.mousset@inf.ethz.ch.
Supported by grant no. 6910960 of the Fonds National de la Recherche, Luxembourg.}}
\newtheorem{theorem}{Theorem}[section]
\newtheorem{lemma}[theorem]{Lemma}
\newtheorem{corollary}[theorem]{Corollary}
\newtheorem{definition}[theorem]{Definition}
\newtheorem{claim}[theorem]{Claim}
\newtheorem{conjecture}{Conjecture}
\begin{document}

\maketitle

\begin{abstract}
  Let $\mathcal G$ be a separable family of graphs. Then for all positive
  constants $\epsilon$ and $\Delta$ and for every sufficiently large integer
  $n$, every sequence $G_1,\dotsc,G_t\in\mathcal G$ of graphs of order $n$ and
  maximum degree at most $\Delta$ such that $e(G_1)+\dotsb+e(G_t) \leq
  (1-\epsilon)\binom{n}{2}$ packs into $K_n$. This improves results of
  Böttcher, Hladk\'y, Piguet, and Taraz when $\mathcal G$ is the class of
  trees and of Messuti, Rödl, and Schacht in the case of a general
  separable family. The result also implies approximate versions of the
  Oberwolfach problem and of the
  Tree Packing Conjecture of Gyárfás and Lehel (1976) for the case that all
  trees have maximum degree at most $\Delta$.

  The proof uses the local resilience of random graphs and a special
  multi-stage packing procedure.
\end{abstract}

\section{Introduction}

A \emph{packing} of graphs $G_1,\ldots, G_t$ into a graph $G$ is a collection
of graph embeddings $f_i\colon G_i\to G$ whose images
$f_1(G_1),\dotsc,f_t(G_t)$ are pairwise edge-disjoint subgraphs of $G$. If such
a packing exists, then the graphs $G_1,\dotsc,G_t$ are said to \emph{pack} into
$G$. In this context, the graph $G$ is often called the \emph{target graph} or
the \emph{host graph}. The packing is said to be \emph{perfect} if it uses all
edges of the host graph, that is, if $e(G_1)+\dotsb + e(G_t) = e(G)$.

The problem of finding a perfect packing of identical copies of a graph $H$ in
$K_n$ has a long and rich history. If $H= K_k$, then perfect packings of $H$
into $K_n$ are also called \emph{block designs} or \emph{Steiner systems} and are a
central object of study in the field of design theory. One easily sees that a
perfect packing of $K_k$ into $K_n$ can only exist if $(k-1)| (n-1)$ and
$k(k-1)|n(n-1)$. In 1847, Kirkman proved that when $k=3$, then these necessary
conditions are also sufficient for the existence of a packing. The conjecture
that the conditions are actually sufficient for fixed $k$ and for sufficiently
large $n$ is known as the Existence Conjecture for block designs. The 1975
proof of this conjecture by Wilson~\cite{wilson1} is considered a breakthrough
result in combinatorics. If we move to more general graphs $H$, then once again
we see that a perfect packing of identical copies of $H$ into $K_n$ cannot
exist unless $e(H)|\binom{n}{2}$ and $\gcd(H) | (n-1)$, where the $\gcd$ of a
graph is the greatest common divisor of the vertex degrees. It is not too hard
to come up with examples where these conditions are satisfied, but where there
does not exist a perfect packing of $H$ into $K_n$; for example one cannot pack
two copies of $K_{1,3}$ into $K_4$. However, a remarkable theorem of Wilson
states that for every $H$ there is some $n_0(H)$ such that the divisibility
conditions \emph{are} sufficient whenever $n>n_0(H)$~\cite{wilson2}. The number
$n_0(H)$ is quite large and at present there is little hope of determining what
its smallest possible value might be.

This leaves open the question of packing graphs where the number of vertices
can be close to $n$. As mentioned above, the obvious divisibility conditions
are not always sufficient to guarantee a perfect packing in this case. Clearly, not
even two graphs need to pack in general: for example two stars on $2n$ vertices
do not pack into $K_{2n}$. If we wish to find perfect (or nearly perfect) packings of
general graphs, then we need to find some way to exclude these and similar
cases. One such way comes in the form of the following beautiful 1976
conjecture of Gy\'arfas and Lehel~\cite{gyarfas-lehel}.

\begin{conjecture}[Tree Packing Conjecture]\label{tpc}
  Any collection $T_1,\dotsc,T_n$ of trees such that $v(T_i) = i$ packs into
  $K_n$.
\end{conjecture}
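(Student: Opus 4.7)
The total edge count of the trees is $\sum_{i=1}^{n}(i-1)=\binom{n}{2}$, so any valid packing must be \emph{perfect}, using every edge of $K_n$. My plan is to split the trees by a slowly growing maximum-degree threshold $\Delta=\Delta(n)$, say $\Delta=\log^{10} n$, into a family $\mathcal T_{\text{low}}$ of $\Delta$-bounded trees and a smaller family $\mathcal T_{\text{high}}$ of trees that contain at least one vertex of degree exceeding $\Delta$. A double-counting of endpoints against the total $2\binom{n}{2}$ shows that $\mathcal T_{\text{high}}$ collectively contains only $O(n^2/\Delta)$ high-degree vertices, so after excising those vertices the trees in $\mathcal T_{\text{high}}$ decompose into $\Delta$-bounded forests.

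\textbf{Phases.} I would proceed in three phases. In Phase~1, embed the few high-degree vertices of each tree in $\mathcal T_{\text{high}}$ onto carefully chosen host vertices, spreading their large neighbourhoods uniformly across $V(K_n)$ via a greedy random procedure; the bounded-degree residual forests are then treated together with $\mathcal T_{\text{low}}$. In Phase~2, I would invoke the paper's main theorem (trees famously form a separable family, since every tree has a vertex whose removal leaves components of size at most half of the total) to pack essentially all of $\mathcal T_{\text{low}}$, obtaining a packing that covers a $(1-\epsilon)$-fraction of $E(K_n)$. In Phase~3, the critical absorption phase, I would reserve in advance a sublinear collection of structurally flexible trees (e.g.\ medium-sized brooms or caterpillars, chosen because their many leaves can be rerouted to almost arbitrary free vertices) and use them to fill exactly the residual $\epsilon\binom{n}{2}$ edges of $K_n$.

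\textbf{Main obstacle.} The principal difficulty is \emph{exactness}. The paper's machinery handles only the $(1-\epsilon)$ regime, whereas the conjecture demands a perfect packing, and there is no black-box device for converting an $\epsilon$-approximate packing into an exact one. This is compounded by the near-spanning trees $T_{n-o(n)},\dotsc,T_n$, each of which interacts with almost every vertex of the host and therefore cannot be treated as a local perturbation. A workable Phase~3 would require the degree sequence of the residual graph to match, vertex by vertex, the joint degree sequences of the reserved trees; preserving such an alignment through Phases~1 and~2 appears to need new structural ideas beyond the separability and local-resilience framework of the paper. It is precisely this gap between an approximate and a perfect packing that keeps the full Tree Packing Conjecture open, and why the present paper only implies the approximate, bounded-degree version of it.
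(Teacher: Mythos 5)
There is a genuine gap --- indeed, there is no proof here to compare against, because the statement you were asked about is Conjecture~\ref{tpc}, which this paper explicitly leaves open. The paper only derives approximate, bounded-degree consequences of it (packing the trees $T_{\epsilon n},\dotsc,T_n$ of maximum degree at most a \emph{constant} $\Delta$, via Theorem~\ref{thm:main}), and your proposal does not close the distance to the exact statement: your Phase~3 is a wish rather than an argument. You correctly observe that $\sum_i(i-1)=\binom{n}{2}$ forces any packing to be perfect, but you give no mechanism by which a reserved sublinear family of ``flexible'' trees could absorb exactly the leftover $\epsilon\binom{n}{2}$ edges; this would require the residual graph after Phases~1--2 to have a degree sequence matching, vertex by vertex, the joint degree sequences of the reserved trees, and nothing in Phases~1--2 (which are built from the paper's $(1-\epsilon)$-approximate machinery and give no control at that level of precision) guarantees such an alignment. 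Since you concede this yourself, the attempt establishes nothing beyond what the paper already proves.

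Two further concrete problems in the parts you do sketch. First, you invoke the paper's main theorem with $\Delta=\log^{10}n$, but Theorem~\ref{thm:main} is stated only for constant $\Delta$, with $n_0=n_0(\epsilon,\Delta,\mathcal G)$ depending on $\Delta$ in an unspecified (and, as the authors say, far-from-optimal) way; one cannot let $\Delta$ grow with $n$ without reproving the theorem with quantitative dependence, and the same issue afflicts the $\Delta$-bounded residual forests from $\mathcal T_{\text{high}}$. Second, Theorem~\ref{thm:main} only packs families whose total number of edges is at most $(1-\epsilon)\binom{n}{2}$, so even Phase~2 cannot pack ``essentially all'' of $\mathcal T_{\text{low}}$ as stated: you must first set aside an $\epsilon$-fraction of the edge mass, which only enlarges the exact-completion problem that Phase~3 is supposed to solve. (Your observation that trees form a separable family and the double count bounding the number of high-degree vertices by $O(n^2/\Delta)$ are both fine, but they are the easy parts.)
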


Note that since each $T_i$ has exactly $i-1$ edges, the packing of the trees in
question would necessarily be perfect. Although Conjecture~\ref{tpc} is still
open, there are several partial results. The conjecture is known to be true
when the trees satisfy various structural requirements,
see~\cite{dobson1,dobson2,dobson3,gyarfas-lehel,roditty}. For arbitrary trees,
Bollob\'as~\cite{bollobas} proved that one can at least pack the small trees
$T_1,\dotsc,T_{n/\sqrt{2}}$ into $K_n$, which can be improved to $\sqrt{3}n/2$
if the Erd\H{o}s-S\'os conjecture is true. The situation for the larger trees
is much worse: it is only known that the five trees
$T_{n-4},T_{n-3},T_{n-2},T_{n-1},T_n$ pack into $K_n$~\cite{zak} (the previously best known
result for three trees $T_{n-2},T_{n-1},T_n$ was proved in~\cite{hbk}), although Balogh and
Palmer~\cite{balogh-palmer} proved
that one can pack the $n^{1/4}/10$ largest trees into $K_{n+1}$ (note that an
extra vertex is needed), and they also proved that one can pack the $n^{1/3}/4$
largest trees into $K_n$ if each has maximum degree at least $2n^{2/3}$. In any
case, packing even a small linear fraction of the large trees appears to be a
very hard problem.

The problem becomes more tractable if one additionally assumes that each tree
$T_i$ has bounded maximum degree. A well-known result of Böttcher, Hladk\'y,
Piguet, and Taraz~\cite{bhpt} shows that in this setting, one can achieve
nearly perfect packings of trees, provided that the size of the trees is
bounded away from $n$:

\begin{theorem}[Böttcher, Hladk\'y, Piguet, and Taraz~\cite{bhpt}]\label{thm:bhpt}
  For all positive constants $\epsilon$ and $\Delta$ and for every sufficiently
  large integer $n$, the following holds.
  Every collection $T_1,\dotsc,T_t$ of trees of order at most $(1-\epsilon)n$ and maximum
  degree at most $\Delta$ such that $e(T_1)+\dotsb+e(T_t)\leq (1-\epsilon)\binom{n}{2}$
  packs into $K_n$.
\end{theorem}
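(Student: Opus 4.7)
My approach would be a random greedy sequential embedding that maintains the quasi-randomness of the leftover graph throughout the packing. I would order the trees in decreasing size and embed them one by one. Writing $H_i$ for the subgraph of $K_n$ consisting of edges not yet used after embedding $T_1,\dotsc,T_{i-1}$, the edge budget $e(T_1)+\dotsb+e(T_t)\leq(1-\epsilon)\binom{n}{2}$ guarantees $e(H_i)\geq \epsilon\binom{n}{2}$ at every step, so $H_i$ keeps density at least $\epsilon$. For each $i$, I would choose an embedding $f_i$ of $T_i$ uniformly at random from the injective maps $V(T_i)\hookrightarrow V(K_n)$ whose edge image lies in $E(H_i)$, aiming to show this random procedure succeeds at every step with high probability.

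The individual embedding step would rely on a \emph{tree embedding lemma for quasi-random graphs}: any graph $H$ on $n$ vertices that is $(\beta,d)$-pseudorandom with $d\geq\epsilon$ and $\beta$ small enough in terms of $\epsilon$ and $\Delta$ contains every tree of order at most $(1-\epsilon)n$ and maximum degree at most $\Delta$. Such a lemma is provable by a layer-by-layer BFS embedding that uses the co-degree estimates from pseudorandomness to guarantee many extension options at each step, supplemented by an absorbing argument to place the final $o(n)$ vertices; the slack $\epsilon n$ between the tree's order and the host's order is precisely what makes room for the absorber.

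The main obstacle is showing that $H_i$ stays sufficiently pseudorandom across the $t = O(n)$ iterations. Since each $f_i$ is a random embedding into $H_i$, the edges it uses should be spread roughly uniformly, and on expectation $H_{i+1}$ inherits pseudorandomness from $H_i$. Making this rigorous requires martingale concentration: I would expose the random embedding $f_i$ vertex-by-vertex in the BFS order of $T_i$, noting that each exposure alters only $O(\Delta)$ edges of the leftover graph. This gives bounded-difference martingales to which Azuma's or Freedman's inequality applies, yielding concentration for the statistics that define pseudorandomness (vertex degrees $d_{H_i}(v)$, co-degrees $|N_{H_i}(u)\cap N_{H_i}(v)|$, and edge counts $e_{H_i}(A,B)$ between vertex sets $A,B$).

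A union bound over the polynomially many pseudorandomness conditions and the $t = O(n)$ iterations, combined with the embedding lemma applied at each step, then shows the randomized packing succeeds with positive probability. The most delicate point is calibrating the pseudorandomness parameters so that the tiny additive losses from concentration at each step do not accumulate beyond what the embedding lemma tolerates in the final, densest-packed iterations; I expect this bookkeeping, and in particular handling the last few trees when $H_i$ has density barely above $\epsilon$, to be where most of the technical work lies.
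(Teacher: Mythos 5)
Your plan is not the route this paper takes: here Theorem~\ref{thm:bhpt} is quoted from~\cite{bhpt}, and the machinery of the paper derives a stronger statement (Theorem~\ref{thm:main}) by a completely different, non-sequential argument --- a random partition of $K_n$ into slices $\Gamma^{(0)},\dotsc,\Gamma^{(M)}$, a reduction via separability to graphs with bounded components which are packed into clique factors of the slices (Lemma~\ref{lemma:bounded-gnp}, using Pippenger--Spencer), and a completion step via perfect matchings and local resilience. Your sequential ``random greedy plus maintained quasirandomness'' scheme is closer in spirit to the original argument of~\cite{bhpt}, but as written it has a genuine gap at its central step. Azuma (or Freedman) applied to a vertex-by-vertex exposure of $f_i$ only controls the fluctuation of the relevant statistics around their conditional expectations; it gives you nothing about those expectations themselves. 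To keep $H_i$ pseudorandom you must know that a \emph{uniformly} random embedding of an almost-spanning bounded-degree tree into $H_i$ uses each edge of $H_i$ (and touches each vertex and each pair) with probability close to the average value, i.e.\ you need a counting statement of the form ``the number of embeddings of $T_i$ into $H_i$ through a fixed edge is $(1\pm o(1))$ times $e(T_i)/e(H_i)$ times the total number of embeddings.'' That is a much stronger input than the existence lemma you invoke, it does not follow from pseudorandomness of $H_i$ in any routine way, and without it the drift is uncontrolled: a vertex or pair whose leftover degree is slightly atypical may be systematically over- or under-used at every step, and over the $\Theta(n)$ (or more) rounds such biases can compound multiplicatively and destroy the density exactly in the final, most depleted iterations --- the very regime you flag but do not resolve. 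Proofs that do make this strategy work replace the uniform embedding by a bespoke randomized embedding algorithm whose edge-usage distribution is controlled by construction; that replacement (or a counting lemma) is the missing idea.

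Two smaller points. First, $t=O(n)$ is not guaranteed: the trees may be tiny, so $t$ can be as large as $\Theta(n^2)$; you must first merge small trees into bounded-degree forests of linear order (as this paper does for its $G_i$) and then run your argument for forests, otherwise the union bound over steps and the martingale bookkeeping do not even get started. Second, with $\epsilon n$ spare vertices no absorbing argument is needed inside the embedding lemma --- standard extendability/BFS embeddings suffice --- so that part is harmless but also not where the difficulty lies. By contrast, the paper's approach avoids the drift problem altogether: randomness is injected once (the partition of $K_n$ and the random clique-factor packings), each graph is packed into its own pre-allocated random slice, and the only ``sequential'' steps (embedding the separators and the $2$-independent sets) are protected by reserved vertex sets and by the local resilience of the auxiliary bipartite graphs, so no quasirandomness of a shrinking leftover graph ever has to be maintained.
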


In particular, their theorem yields the following asymptotic version of the
Tree Packing Conjecture.

\begin{corollary}\label{cor1}
  For all positive constants $\epsilon$ and $\Delta$ and for every sufficiently
  large integer $n$, every sequence $T_1,\dotsc,T_{(1-\epsilon)n}$ of trees
  with $v(T_i) = i$ and maximum degree at most $\Delta$ packs into $K_n$.
\end{corollary}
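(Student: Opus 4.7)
The plan is to derive the corollary as a direct consequence of Theorem~\ref{thm:bhpt}. Theorem~\ref{thm:bhpt} has two numerical hypotheses: a bound on the order of each tree, and a bound on the total number of edges. Both are easy to verify once the slack parameter is adjusted slightly.

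First, I would check the order condition. Each tree $T_i$ in the given sequence satisfies $v(T_i) = i \leq (1-\epsilon)n$, so the order bound of Theorem~\ref{thm:bhpt} is already met with the same constant $\epsilon$. In particular, no renaming of parameters is needed for this part of the hypothesis.

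Next, I would bound the total edge count. Since each $T_i$ is a tree on $i$ vertices, $e(T_i) = i-1$, so
\[
  \sum_{i=1}^{(1-\epsilon)n} e(T_i) \;=\; \binom{(1-\epsilon)n}{2}.
\]
A routine computation shows that for large $n$ this quantity is at most $(1-\epsilon)^2\binom{n}{2} + O(n)$, which is bounded above by $(1-\epsilon')\binom{n}{2}$ as soon as $\epsilon' < \epsilon(2-\epsilon)$; the choice $\epsilon' := \epsilon/2$ is convenient and works for all sufficiently large $n$. Crucially, this is where one uses that the trees come from an initial segment $T_1, \dotsc, T_{(1-\epsilon)n}$ rather than going all the way up to $T_n$: the sum of the first $(1-\epsilon)n$ triangular numbers is strictly less than $\binom{n}{2}$ by a constant factor.

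With both hypotheses verified, I would finish by invoking Theorem~\ref{thm:bhpt} with parameters $\epsilon'$ and $\Delta$ to obtain the desired packing of $T_1,\dotsc,T_{(1-\epsilon)n}$ into $K_n$. There is no substantial obstacle to overcome: the corollary is essentially a specialisation of Theorem~\ref{thm:bhpt} to the sizes prescribed by the Tree Packing Conjecture, and the only non-trivial point is the arithmetic observation that the positive slack $\epsilon$ in the tree orders yields a positive (though somewhat smaller) slack $\epsilon'$ in the total edge count.
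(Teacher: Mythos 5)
Your proposal is correct and matches the paper's intent: the paper states Corollary~\ref{cor1} as an immediate consequence of Theorem~\ref{thm:bhpt}, and your argument simply makes the routine verification explicit (orders at most $(1-\epsilon)n$, total edge count $\binom{(1-\epsilon)n}{2}\leq(1-\epsilon)^2\binom{n}{2}+O(n)$, then apply the theorem with a slightly smaller slack such as $\epsilon'=\epsilon/2$). The only nitpick is the phrase ``sum of the first $(1-\epsilon)n$ triangular numbers'' --- the quantity is the single triangular number $\binom{(1-\epsilon)n}{2}$, as your displayed computation correctly shows.
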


The fact that the graphs $T_i$ from Conjecture~\ref{tpc} are assumed to be
trees is very important (otherwise the conjecture is simply not true). However,
there is no reason why Theorem~\ref{thm:bhpt} should hold only for trees. In
fact, we believe that Theorem~\ref{thm:bhpt} remains true if one replaces the
word ``trees'' by ``graphs''. While this is not known, a result similar to
Theorem~\ref{thm:bhpt} holds at least for all \emph{separable families} of
graphs. In the literature, a family $\mathcal G$ of graphs is typically said to
be separable if it satisfies a \emph{separator theorem}, meaning there exist
constants $c,\alpha<1$ such that the removal of $O(v(G)^\alpha)$ vertices from
any $G\in \mathcal G$ can partition the graph into disjoint subgraphs of size
at most $c\cdot v(G)$. However, following~\cite{mrs}, we adopt a slightly more
general definition:

\begin{definition}[separable]\label{def:separable}
  A graph $G$ is \emph{$(\delta,K)$-separable} if by removing $\delta v(G)$
  vertices from $G$ one can obtain a graph in which every component has size at
  most $K$. A family $\mathcal G$ of graphs is \emph{separable} if for every
  $\delta>0$ there is some $K$ such that each graph $G\in \mathcal G$ is
  $(\delta,K)$-separable.
\end{definition}

As shown in~\cite{mrs}, every family of graphs that is separable in the
familiar sense given above is also separable according to
Definition~\ref{def:separable}. It is worth mentioning that by the famous
separator theorem of Alon, Seymour and Thomas~\cite{ast}, all non-trivial
minor-closed families of graphs (such as planar graphs, graphs of bounded
treewidth, etc.) are separable. In particular, the class of trees is separable.
Messuti, R\"odl, and Schacht proved the following generalisation of
Theorem~\ref{thm:bhpt} to separable families of graphs~\cite{mrs}.

\begin{theorem}[Messuti, Rödl, and Schacht~\cite{mrs}]\label{thm:mrs}
  Assume that $\mathcal G$ is a separable family of graphs. Then for all
  positive constants $\epsilon$ and $\Delta$ and for every sufficiently large
  integer $n$, the following holds. Every collection $G_1,\dotsc,G_t\in
  \mathcal G$ of $t\leq (1-\epsilon)n$ graphs of order at most $(1-\epsilon)n$ and maximum degree at
  most $\Delta$ such that $e(G_1)+\dotsb+e(G_t)\leq
  (1-\epsilon)\binom{n}{2}$ packs into $K_{n}$.
\end{theorem}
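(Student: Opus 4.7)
The plan is to exploit separability to reduce the packing problem to one about \emph{bounded-size} pieces, for which random-greedy methods are effective. For each $G_i$, pick a parameter $\delta\ll \epsilon/\Delta$ and use Definition~\ref{def:separable} to get a separator $S_i\subseteq V(G_i)$ with $|S_i|\le \delta v(G_i)$ such that every component of $G_i-S_i$ has at most $K=K(\delta)$ vertices. Because $G_i$ has maximum degree at most $\Delta$, the number of edges incident to $S_i$ is at most $\Delta|S_i|\le\Delta\delta v(G_i)$, so summed over $i\le (1-\epsilon)n$ the total separator-incident edge count is $O(\Delta\delta n^2)$, which by choice of $\delta$ is at most, say, $(\epsilon/10)\binom{n}{2}$. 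Hence the bulk of the edges of the $G_i$ sit inside bounded-size components, while the separators carry only a negligible fraction.

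I would first fix a random partition $V(K_n)=A\sqcup B$ with $|B|=\sqrt{\delta}\,n$: the large part $A$ will host the components of each $G_i-S_i$, and the reservoir $B$ will be used later to embed the separators $S_i$. Processing the sequence $G_1,\dots,G_t$ in order, for each $G_i$ I would embed the components $C$ of $G_i-S_i$ one at a time into $K_n[A]$ using a random-greedy rule: at each step, choose uniformly at random among all embeddings of the current component that avoid the edges already used in previous steps. Since each component has only $O(1)$ vertices, a counting argument in the current leftover graph gives $\Omega(n^{v(C)})$ candidate embeddings provided the leftover retains good pseudorandom density, so the process terminates successfully with high probability. The total vertex usage in $A$ is at most $\sum_i v(G_i-S_i)\le (1-\epsilon)n$, comfortably below $|A|$.

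Once all the components have been embedded, I would complete each embedding to all of $V(G_i)$ by placing the vertices of $S_i$ into $B$. Each vertex $s\in S_i$ has at most $\Delta$ neighbors already embedded in $A$, so I only need a vertex of $B$ adjacent, via yet-unused edges, to that small neighborhood. Since $|B|=\sqrt{\delta}\,n\gg |S_i|$ and only an $O(\delta)$ fraction of edges between $A$ and $B$ has been touched so far, a greedy vertex-by-vertex extension, or equivalently a Hall's-theorem matching argument, produces the desired images.

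The main obstacle is the component-packing stage: one must maintain a pseudorandomness invariant on the leftover graph on $A$, stating that after each embedding step the set of unused edges still has uniform co-degrees up to lower-order error. Without this, the count of $\Omega(n^{v(C)})$ valid embeddings can fail for later components. Controlling this invariant inductively is the technical heart of the argument and would be handled by an Azuma- or Kim--Vu-type martingale concentration, carried through every one of the at most $n\cdot O(n)=O(n^2)$ embedding steps via a union bound; this is morally the same sort of analysis that underpins the Rödl nibble.
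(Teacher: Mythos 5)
Note first that this paper does not prove Theorem~\ref{thm:mrs} at all: it is quoted from Messuti, R\"odl, and Schacht~\cite{mrs} and used as a black box (for instance inside Lemma~\ref{lemma:bounded-clique}). So your proposal has to stand on its own, and as written it does not. The decisive gap is the component-packing stage. You embed $\Theta(n^2)$ bounded-size components one by one by a random-greedy rule and assert that a ``pseudorandomness invariant'' (uniform co-degrees of the unused edges) can be maintained by an Azuma/Kim--Vu argument through all $\Theta(n^2)$ steps. That invariant is the entire difficulty of the theorem, not a technical afterthought: the leftover density decays to the constant $\epsilon$, the errors you must control are cumulative over $\Theta(n^2)$ steps, and no straightforward martingale-plus-union-bound argument is known to keep the leftover quasirandom at constant density under such a process. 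Saying it is ``morally the R\"odl nibble'' does not close this, because the nibble works precisely by replacing the one-at-a-time greedy step with carefully analysed semi-random rounds whose quasirandomness preservation is proved, and that proof is the content. This is exactly why the known arguments avoid your route: \cite{mrs} packs the bounded components along an explicit decomposition of $K_n$ (into Hamilton paths/cycles), and the present paper routes its analogous step (Lemma~\ref{lemma:bounded-gnp}) through Pippenger--Spencer hypergraph edge-colouring (Theorem~\ref{thm:ps}) applied to clique factors, rather than trusting a raw greedy process.

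There is also a secondary but genuine gap in the separator stage. Your justification is global -- ``only an $O(\delta)$ fraction of the $A$--$B$ edges has been touched'' -- but the obstruction is local congestion at a single vertex. A fixed $v\in A$ can serve, over the up to $(1-\epsilon)n$ graphs, as the image of a neighbour of a separator vertex in as many as $\Theta(n)$ of them, each time consuming up to $\Delta$ of its merely $\sqrt{\delta}\,n$ edges into $B$; then Hall/greedy fails at $v$ even though the total number of used $A$--$B$ edges is tiny. To rule this out you must build into the first stage the guarantee that the sets of separator-neighbours are spread essentially evenly over $A$ (this is precisely the role of property (1) concerning $f_i(A_i)$ in Lemma~\ref{lemma:bounded-gnp} of the paper), and prove it for your random-greedy embedding -- which again lands you back at the unproved concentration claim above. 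So the skeleton (separate, pack small components, then absorb separators) matches the standard strategy, but both load-bearing steps are asserted rather than proved.
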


Note that in Theorems~\ref{thm:bhpt} and \ref{thm:mrs}, the graphs being packed
have a number of vertices which is bounded away from $n$. This situation is not
untypical: in graph embedding problems, it is almost always easier to embed a
nearly spanning structure than it is to embed a spanning structure. However, in
light of the fact that the hard part of Conjecture~\ref{tpc} is the embedding
of the very large trees, it is highly desirable to develop methods that allow
the packing of \emph{spanning} subgraphs. The contribution of this paper is the
following stronger version of Theorem~\ref{thm:mrs}:

\begin{theorem}\label{thm:main}
  Assume that $\mathcal G$ is a separable family of graphs. Then for all
  positive constants $\epsilon$ and $\Delta$, there exists $n_0 =
  n_0(\epsilon,\Delta,\mathcal G)$ such that the following holds for all $n\geq
  n_0$. Every collection $G_1,\dotsc,G_t\in \mathcal G$ of graphs of order at
  most $n$ and maximum degree at most $\Delta$ such that
  $e(G_1)+\dotsb+e(G_t)\leq (1-\epsilon)\binom{n}{2}$ packs into $K_{n}$.
\end{theorem}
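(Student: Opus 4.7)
The plan is to use the separability of $\mathcal{G}$ to decompose each nearly-spanning $G_i$ into a ``bulk'' with bounded-size components and a small separator, pack the bulks into the host through a randomised embedding procedure, and attach the separator vertices afterwards by exploiting the local resilience of a reserved random set of edges. Fix $\delta=\delta(\epsilon)\ll\epsilon$ and let $K=K(\delta)$ come from separability of $\mathcal G$. For each $G_i$ of order $n_i\le n$, fix $S_i\subseteq V(G_i)$ with $|S_i|\le\delta n$ so that every component of $G_i^\circ:=G_i-S_i$ has size at most $K$, and let $R_i$ be the subgraph of $G_i$ containing the edges incident to $S_i$. Thus each $G_i$ decomposes as $G_i^\circ\cup R_i$, where $G_i^\circ$ has components of bounded size and $R_i$ is a bounded-degree graph supported on at most $(\delta+o(1))n$ vertices.

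Sample a random reserve $\Gamma\subseteq K_n$ by retaining each edge independently with some small probability $p=p(\epsilon)$, and set $M=K_n\setminus\Gamma$. With high probability $\Gamma$ (and $M$) is quasi-random: every vertex has $\Gamma$-degree $(1\pm o(1))pn$, and every pair of linear-sized subsets spans roughly the expected number of edges. Now pack the bulks $G_1^\circ,\dotsc,G_t^\circ$ into $M$ edge-disjointly by a randomised greedy procedure: process the graphs in turn; for each $G_i^\circ$, first select a ``slot set'' $T_i\subseteq V(K_n)$ of size $|S_i|$, and then embed each component of $G_i^\circ$ into $V(K_n)\setminus T_i$ by choosing the image uniformly among valid options. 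Because the components are of bounded size, each step is only a constant-sized extension and the Messuti--R\"odl--Schacht style randomised analysis (together with standard concentration inequalities such as Azuma--Hoeffding and Chernoff bounds) carries through; the number of free $M$-edges at each vertex remains within a small error of its expectation throughout.

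Finally, attach the separators. For $i=1,\dotsc,t$ in turn, extend the embedding $f_i\colon V(G_i^\circ)\to V(K_n)$ to $V(G_i)$ by choosing a bijection $S_i\to T_i$ such that every edge of $R_i$ maps to an edge of $K_n$ that has not yet been used by any prior packing. Since the neighbours of $S_i$ in $V(G_i)\setminus S_i$ already have fixed images, and $R_i$ has maximum degree $\Delta$, this is a constrained embedding of a bounded-degree graph on $|S_i|\le\delta n$ vertices. The key point is that essentially all of $\Gamma$, together with the residue of $M$, is still unused going into the final stage, and $\Gamma$ remains quasi-random even after many separators have been embedded. Local resilience results for random graphs then guarantee that such a bounded-degree graph can be embedded into the residual free subgraph, so each separator can be attached in turn.

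The main technical obstacle is ensuring that the random reserve $\Gamma$ remains resilient throughout all $t$ attachment rounds. A vertex that happens to lie in many slot sets $T_i$ loses $\Gamma$-degree fastest and is the bottleneck. Controlling this requires choosing the $T_i$ by a procedure that spreads them evenly across $V(K_n)$ (so that every vertex belongs to roughly the expected number of slot sets), coupling this choice with the randomisation of the bulk packing, and invoking martingale concentration to control the accumulated edge losses. Orchestrating the interplay between these randomisations, together with careful parameter balancing among $\epsilon$, $\delta$, $p$ and the errors in the concentration bounds, constitutes the ``special multi-stage packing procedure'' referenced in the abstract.
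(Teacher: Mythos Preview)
Your outline captures the separator/reserve idea, but it misses the key device that makes the spanning completion go through: the paper does \emph{not} embed the separator $S_i$ last. The problem with your final step is that $S_i$ has no special structure beyond bounded degree; in particular it may induce edges, and distinct vertices of $S_i$ may share neighbours in $G_i^\circ$. So the map $S_i\to T_i$ you need is a genuine spanning embedding of a bounded-degree graph with anchors, not a matching, and ``local resilience results for random graphs'' do not hand you such an embedding. (If they did, separability would play no role in the completion step and you would essentially be reproving the much harder Kim--K\"uhn--Osthus--Tyomkyn theorem.)

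What the paper actually does is remove, in addition to $S_i$, a \emph{$2$-independent} set $I_i$ of size $\gamma n$ with $\delta\ll\gamma$. In Phase~I the bulk $G_i-I_i-S_i$ is packed into a random piece $\Gamma^{(k)}-Z^{(k)}$; in Phase~II the separator $S_i$ is embedded \emph{greedily} into the reserved set $Z^{(k)}$, which is possible precisely because $|S_i|=\delta n\ll \zeta n=|Z^{(k)}|$, so there is ample room and no bijection constraint. Only in Phase~III is the spanning completion performed, and there the set to be placed is $I_i$. Because $I_i$ is $2$-independent, the neighbourhoods $g_i(N_{G_i}(v))$ for $v\in I_i$ are disjoint, and completing the embedding reduces to a perfect matching in an auxiliary bipartite graph $B_i$ that is (conditionally) distributed like $B(\gamma n,p_0^{\Delta})$; this is exactly where Lemma~2.4 on local resilience of matchings applies. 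The ``multi-stage'' aspect you allude to is this three-phase split, together with the partition of both the host and the collection $\{G_i\}$ into $M=\gamma^{-2}$ pieces (via B\'ar\'any--Doerr) so that the balance conditions needed for Phase~III (Claim~3.1) can be verified. Your two-phase scheme collapses Phases~II and~III into one step that is not, as written, tractable.
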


While the theorem applies to all separable families, the result is new even
when $\mathcal G$ is the family of trees. From it, one also immediately obtains
a stronger approximate version of the Tree Packing Conjecture:

\begin{corollary}
  For all positive constants $\epsilon$ and $\Delta$ and for every sufficiently
  large integer $n$, every collection $T_{\epsilon n},\dotsc,T_n$ of trees
  with $v(T_i) = i$ and maximum degree at most $\Delta$ packs into $K_n$.
\end{corollary}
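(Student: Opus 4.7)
The corollary is an immediate application of Theorem~\ref{thm:main} to the family of trees, which (as noted in the paragraph following Definition~\ref{def:separable}) is a separable family. Thus my plan is merely to verify that the hypotheses of Theorem~\ref{thm:main} are satisfied, with a slightly smaller value of $\epsilon$.

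First, given $\epsilon,\Delta>0$, I set $\mathcal{G}$ to be the family of all trees and define $\epsilon' := \epsilon^2/4$. Let $n$ be large. The collection $T_{\epsilon n},\dotsc,T_n$ consists of trees of order at most $n$ and maximum degree at most $\Delta$, so the only thing to check is that their total edge count is at most $(1-\epsilon')\binom{n}{2}$. Since $T_i$ is a tree on $i$ vertices, it has $i-1$ edges, and therefore
\[
\sum_{i=\epsilon n}^{n} e(T_i) \;=\; \sum_{i=\epsilon n}^{n}(i-1) \;=\; \binom{n}{2}-\binom{\epsilon n}{2}+O(n)
\;\leq\; (1-\epsilon^2/2)\binom{n}{2}
\]
for all sufficiently large $n$, which is smaller than $(1-\epsilon')\binom{n}{2}$. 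Applying Theorem~\ref{thm:main} with parameters $\epsilon'$ and $\Delta$ to $\mathcal{G}$ then yields a packing of $T_{\epsilon n},\dotsc,T_n$ into $K_n$, as required.

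There is no genuine obstacle here; the work is entirely contained in Theorem~\ref{thm:main}. The only mildly non-trivial point is observing that restricting attention to trees of order at least $\epsilon n$ saves a constant fraction of the edges of $K_n$, which is precisely the slack needed to invoke Theorem~\ref{thm:main}.
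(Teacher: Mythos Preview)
Your proof is correct and matches the paper's approach: the paper simply states that the corollary follows immediately from Theorem~\ref{thm:main}, and your argument supplies exactly the routine verification that the edge count $\sum_{i=\epsilon n}^{n}(i-1)=(1-\epsilon^2+o(1))\binom{n}{2}$ leaves enough slack to invoke that theorem with a smaller parameter $\epsilon'$.
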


Since the family of all $2$-regular graphs is separable, Theorem~\ref{thm:main}
also yields an approximate version of the `Oberwolfach problem' (proposed by
Ringel in 1967) which states that given any $2$-regular graph $F$ on $n$
vertices, where $n$ is odd, there exists a perfect packing of copies of $F$
into $K_n$. Bryant and Scharaschkin~\cite{bryant-scharaschkin} proved that the
exact statement of the Oberwolfach problem is correct for infinitely many
values of $n$.

There are many possible ways in which Theorem~\ref{thm:main} could be
strengthened. Most obviously, the restriction that the graphs are from a
separable family is likely unnecessary; the theorem as stated should hold for
any bounded-degree graphs $G_1,\dotsc,G_t$. Secondly, one would like to know
the dependence of $n_0$ on the parameters $\epsilon$ and $\Delta$. It is not
clear what kind of dependence follows from our proof, but it is certainly very
far from optimal. Thirdly, it would be interesting to have an efficient
algorithm which, given the graphs $G_1,\dotsc,G_t$, produces a packing into
$K_n$. Our proof does not give such an algorithm, although we pretend
(without proof) that if $\mathcal G$ is the family of trees, then a variation
of our argument yields a randomized algorithm which packs the given trees with
high probability and which runs in time $O_{\epsilon,\Delta}(n^c)$ for some
reasonable absolute constant $c>0$. Lastly, there is some interest in the problem of
packing graphs of bounded maximum degree into random and pseudorandom graphs.
Our proof of Theorem~\ref{thm:main} can be modified with little effort to show
that for any constant $p\in (0,1]$, a.a.s.\ the given graphs $G_1,\dotsc,G_t$
pack into the random graph $G(n,p)$ (see below for the definitions), provided
that $e(G_1)+\dotsb+e(G_t) \leq (1-\epsilon)p\binom{n}{2}$. In its current
form, our proof does not provide packings into any model of pseudorandom
graphs.

Very recently, Kim, Kühn, Osthus, and Tyomkyn~\cite{kkot} have managed
to address the first and last points mentioned above: the condition of
separability in Theorem~\ref{thm:main} is not necessary -- and in fact the
graphs do not just pack into $K_n$ but into any dense quasi-random graph.

Before continuing, we give some further pointers to the literature. Recently,
Keevash extended the results of Wilson on perfect packings of $K_k$ to
quasi-random hypergraphs~\cite{keevash}. Barber, Kühn, Lo, and Osthus studied
perfect packings of triangles or even cycles into graphs of large minimum
degree~\cite{bklo}. Packings of arbitrary small graphs into graphs of large
minimum degree were considered by Glock, Kühn, Lo, Montgomery, and Osthus
in~\cite{gklmo}. A well-known conjecture of Kelly states that every regular
tournament on $n$ vertices can be decomposed into $(n-1)/2$ directed Hamilton
cycles. The proof of a strong form of this conjecture for large $n$ was
given by Kühn and Osthus~\cite{ko1}; the method used seems to be widely
applicable, see~\cite{ko2}.

\subsection*{Notation}
We use $[n]$ for the set $\{1,2,\dotsc,n\}$. If $G$ is a graph and $X$ is
a subset of the vertices, then $N_G(X)$ denotes the neighbourhood of $X$ in $G$, i.e.,
the set of all vertices adjacent to a vertex of $X$.
We define $G-X$ as the subgraph of $G$ induced on $V(G) \setminus X$.

The disjoint union of two graphs $G$ and $H$ is written as $G+H$. If $G$ is a
graph and $n$ is an integer, then we write $G\times n$ for the graph consisting
of $n$ disjoint copies of $G$.

If $f\colon G\to H$ is a graph embedding, then $\Ima f$ denotes
the image of $f$ (considered as a subset of $V(H)$)
and $\Coim f$ denotes the complement $V(H)\setminus \Ima f$ of the image.
On the other hand, we write $f(G)$ to denote the \emph{subgraph} of $H$ that
$G$ is mapped to (which is always isomorphic to $G$).

We use two closely related random graph models. $G(n,p)$ denotes the random
graph on $n$ vertices in which every edge is present independently with
probability $p$. $B(n,p)$ denotes the random \emph{bipartite} graph with parts
of size $n$ in which every edge that goes between the parts is present
independently with probability $p$.

A sequence of events $A_n$ is said to occur asymptotically almost surely
(a.a.s.) if $\pr[A_n]\to1$ as $n\to\infty$. By a common abuse of language, we
simply say that the event $A_n$ occurs a.a.s..

\section{Preliminaries}

For the proof we need some probabilistic tools. First of all, we use the
well-known Chernoff bounds~\cite[Theorem 2.1]{mcdiarmid}.

\begin{lemma}[Chernoff]\label{lemma:chernoff}
  Assume that $X_1,\dotsc, X_n$ are jointly independent binary random variables such
  that $\pr[X_i=1] = p$ and $\pr[X_i=0] = 1-p$. Let $X = X_1+\dotsb+X_n$. Then
  for any $\epsilon \in (0,1)$
  \[ \pr[|X - \e[X]| \geq \epsilon np] \leq 2e^{-\epsilon^2np/3}. \]
\end{lemma}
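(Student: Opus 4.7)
The plan is to use the standard exponential moment method due to Bernstein and Chernoff. The idea is to bound each tail separately and then combine them with a union bound, losing the factor of $2$ that appears in the statement.

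For the upper tail $\pr[X\geq (1+\epsilon)np]$, I would first apply Markov's inequality to the random variable $e^{tX}$ with a parameter $t>0$ to be optimised later, obtaining $\pr[X\geq (1+\epsilon)np] \leq e^{-t(1+\epsilon)np}\e[e^{tX}]$. Since the $X_i$ are jointly independent, the moment generating function factorises as $\e[e^{tX}] = \prod_{i=1}^n \e[e^{tX_i}] = (1-p+pe^t)^n$. Using the elementary inequality $1+x\leq e^x$ this is at most $e^{np(e^t-1)}$, so the whole expression is bounded by $\exp\bigl(np(e^t-1) - t(1+\epsilon)np\bigr)$. Choosing $t = \ln(1+\epsilon)$, which is the minimiser of the exponent, yields the classical bound
\[ \pr[X\geq (1+\epsilon)np] \leq \left(\frac{e^\epsilon}{(1+\epsilon)^{1+\epsilon}}\right)^{np}. \]
The lower tail $\pr[X\leq (1-\epsilon)np]$ is handled symmetrically, applying Markov to $e^{-tX}$ with $t>0$ and optimising at $t = -\ln(1-\epsilon)$, which produces the analogous bound with $(1-\epsilon)$ replacing $(1+\epsilon)$.

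The remaining and slightly annoying step is to show that for every $\epsilon\in(0,1)$ both of these expressions are bounded by $e^{-\epsilon^2 np/3}$. Equivalently, one needs the inequalities $(1+\epsilon)\ln(1+\epsilon) - \epsilon \geq \epsilon^2/3$ and $(1-\epsilon)\ln(1-\epsilon) + \epsilon \geq \epsilon^2/3$. I would prove these by considering the function $\phi(\epsilon) = (1+\epsilon)\ln(1+\epsilon) - \epsilon - \epsilon^2/3$ (and its analogue for the lower tail), differentiating, and observing that $\phi(0) = 0$ while $\phi'(\epsilon)\geq 0$ on $(0,1)$, which follows from a short calculation using $\ln(1+\epsilon) \geq \epsilon - \epsilon^2/2 + \epsilon^3/3 - \dotsb$. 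This is the main technical obstacle, but it is entirely elementary. Adding the two tail estimates gives the factor of $2$ and completes the proof.
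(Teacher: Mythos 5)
The paper does not actually prove this lemma: it is quoted as a standard fact with a citation to McDiarmid's survey, so there is no internal proof to compare against. Your argument is the standard exponential-moment (Bernstein--Chernoff) proof and it is essentially correct: the factorisation of the moment generating function, the bound $1-p+pe^t\le e^{p(e^t-1)}$, the optimal choices $t=\ln(1+\epsilon)$ and $t=-\ln(1-\epsilon)$, and the reduction to the inequalities $(1+\epsilon)\ln(1+\epsilon)-\epsilon\ge\epsilon^2/3$ and $(1-\epsilon)\ln(1-\epsilon)+\epsilon\ge\epsilon^2/3$ (the latter in fact holds with $1/2$ in place of $1/3$) are exactly the textbook route, and summing the two tails gives the factor $2$. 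One small caution on the final calculus step: for the upper tail you need $\ln(1+\epsilon)\ge 2\epsilon/3$ on $(0,1)$, and the truncation $\ln(1+\epsilon)\ge\epsilon-\epsilon^2/2$ you hint at only yields this for $\epsilon\le 2/3$ (higher truncations of the alternating series also fall short near $\epsilon=1$). The clean fix is to observe that $\epsilon\mapsto\ln(1+\epsilon)-2\epsilon/3$ is concave, vanishes at $\epsilon=0$, and equals $\ln 2-2/3>0$ at $\epsilon=1$, hence is nonnegative on all of $[0,1]$. With that patch your proof is complete and matches the standard argument behind the cited result.
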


Additionally, we use the following variant of the Azuma-Hoeffding inequality.
The formulation below follows by applying the usual Azuma-Hoeffding
inequality~\cite[Theorem 27]{fan-lu} to the supermartingale $Y_1,\dotsc,Y_n$
defined by $Y_k := \sum_{i=1}^k (X_i-\mu_i)$.

\begin{lemma}[Azuma-Hoeffding]\label{lemma:azuma}
  Let $X_1,\dotsc,X_n$ be random variables where $0\leq X_i \leq c_i$ for all $i$
  and where there are real numbers $\mu_1,\dotsc,\mu_n$ such that
  \[ \e[X_i\mid X_{i-1},\dotsc,X_1] \leq \mu_i \]
  holds for all $i$. Let $X = X_1+ \dotsb + X_n$ and $\mu =
  \mu_1+\dotsb+\mu_n$. Then for any $\epsilon >0$
  \[ \pr[X \geq (1+\epsilon)\mu] \leq e^{-\epsilon^2\mu^2/(2\sum_{i=1}^nc_i^2)}. \]
\end{lemma}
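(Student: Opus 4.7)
The plan is to follow the hint given right above the statement and reduce to the classical Azuma-Hoeffding inequality applied to the partial sums
\[ Y_k := \sum_{i=1}^k (X_i - \mu_i), \qquad Y_0 := 0, \]
viewed as a process with respect to the natural filtration $\mathcal F_k := \sigma(X_1,\dotsc,X_k)$. Once $(Y_k)$ is shown to be a supermartingale with almost-surely bounded increments, the inequality will fall out of the standard tail bound by specialising $t = \epsilon\mu$.

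Checking the supermartingale property is immediate from linearity and the hypothesis:
\[ \e[Y_k \mid \mathcal F_{k-1}] = Y_{k-1} + \e[X_k \mid \mathcal F_{k-1}] - \mu_k \leq Y_{k-1}. \]
To bound the increments I would first observe that we may assume $0 \leq \mu_i \leq c_i$ without loss of generality: if $\mu_i < 0$, then $X_i \geq 0$ combined with $\e[X_i \mid \mathcal F_{i-1}] \leq \mu_i$ forces $X_i = 0$ almost surely, while if $\mu_i > c_i$ the hypothesis is automatic and $\mu_i$ may be replaced by $c_i$. With this normalisation, $0 \leq X_i \leq c_i$ yields
\[ -c_i \leq -\mu_i \leq Y_i - Y_{i-1} \leq c_i - \mu_i \leq c_i, \]
and hence $|Y_i - Y_{i-1}| \leq c_i$ almost surely.

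With these two properties in hand, the standard Azuma-Hoeffding inequality for supermartingales gives
\[ \pr[Y_n \geq t] \leq \exp\!\Bigl(-\tfrac{t^2}{2\sum_{i=1}^n c_i^2}\Bigr) \]
for every $t > 0$; substituting $t = \epsilon\mu$ and rewriting the event $\{Y_n \geq \epsilon\mu\}$ as $\{X \geq (1+\epsilon)\mu\}$ concludes the proof. The argument is essentially mechanical; the only step that requires any thought is the normalisation $0 \leq \mu_i \leq c_i$, which converts the one-sided bound coming from $0 \leq X_i \leq c_i$ into the two-sided increment bound needed by the classical statement. An alternative, which avoids even this minor step, is to apply the range-based form of Azuma-Hoeffding directly to the interval $[-\mu_i, c_i-\mu_i]$ of length $c_i$, which yields the same (indeed a slightly stronger) bound.
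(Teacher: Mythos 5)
This is exactly the paper's argument: the paper gives no proof beyond the remark that the lemma follows by applying the usual Azuma--Hoeffding inequality for supermartingales (\cite[Theorem 27]{fan-lu}) to the partial sums $Y_k=\sum_{i=1}^k(X_i-\mu_i)$, which is precisely the reduction you carry out. The only wrinkle is your normalisation step: replacing $\mu_i$ by $c_i$ when $\mu_i>c_i$ decreases $\mu$, so strictly you should add that applying the tail bound for the new mean $\mu'\le\mu$ at threshold $(1+\epsilon)\mu-\mu'\ge\epsilon\mu$ still yields the stated exponent --- or simply invoke the range-based form on the intervals $[-\mu_i,c_i-\mu_i]$ of length $c_i$ from your closing remark, which needs no normalisation at all (and note that when $\mu_i<0$ the hypotheses are simply contradictory, since $\e[X_i\mid X_{i-1},\dotsc,X_1]\ge 0$, rather than forcing $X_i=0$).
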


In our proof, the following notion will be very important:

\begin{definition}[Local resilience]
  Let $\mathcal P$ be any monotone increasing graph property. The \emph{local
  resilience} of a graph $G$ with respect to $\mathcal P$ is the minimum number
  $r$ such that by deleting at most $r$ edges at each vertex $v$ of $G$, one
  can obtain a graph not in $\mathcal P$.
\end{definition}

We will only need to consider the case where $\mathcal P$ is the
property of containing a perfect matching. For this, we have the following
lemma, whose proof we omit (it is almost identical to the proof
of Theorem 2.3 in the seminal paper of Sudakov
and Vu~\cite{sudakov-vu-resilience}). We remark that although the lemma gives a
local resilience of almost $1/2$, for our purposes, any positive constant would
suffice.

\begin{lemma}\label{lemma:matchings-resilience}
  The following holds for every constant $\epsilon>0$ and every function $p =
  p(n) = \omega(\log n/n)$. With probability $1-o(n^{-1})$ the local
  resilience of the random bipartite graph $B(n,p)$ with respect to the property of
  containing a perfect matching is greater than $(1-\epsilon)np/2$.
\end{lemma}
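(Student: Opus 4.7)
The plan is to reduce the resilience statement to a single expansion-type property of $B(n,p)$ via Hall's theorem, and then verify that property by a Chernoff-plus-union-bound calculation. Write $L,R$ for the two sides of $B(n,p)$ and set $d := (1-\epsilon)np/2$.

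First I would carry out the reduction. Suppose $G\subseteq B(n,p)$ is obtained by deleting at most $d$ edges at each vertex and lacks a perfect matching. By Hall's theorem there is some $S\subseteq L$ with $|N_G(S)|<|S|$; setting $T:=R\setminus N_G(S)$ we get $|S|+|T|\geq n+1$ and $e_G(S,T)=0$. Every $B(n,p)$-edge between $S$ and $T$ has therefore been deleted, and double-counting from $S$ (or from $T$) shows $e_B(S,T)\leq d\cdot\min(|S|,|T|)$. It thus suffices to prove that with probability $1-o(n^{-1})$ no such pair $(S,T)$ exists in $B(n,p)$; the case where Hall fails on the right side is handled identically using the $L\leftrightarrow R$ symmetry.

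For fixed $(S,T)$ with $|S|=s\leq t=|T|$ and $s+t\geq n+1$ we have $t\geq n/2$, so $e_B(S,T)\sim\mathrm{Bin}(st,p)$ has mean $stp\geq snp/2$ and the threshold $ds=(1-\epsilon)snp/2$ is at most $(1-\epsilon)stp$. Lemma~\ref{lemma:chernoff} therefore yields
\[ \pr[e_B(S,T)\leq ds] \leq 2\exp(-\epsilon^2 stp/3). \]
For the union bound I split on $s$. If $s\leq n/2$, then $n-t\leq s-1$ gives $\binom{n}{t}\leq\binom{n}{s-1}\leq\binom{n}{s}\leq(en/s)^s$, so the contribution is at most
\[ \sum_{s\geq 1} n\cdot(en/s)^{2s}\exp(-\epsilon^2 snp/6); \]
since $np=\omega(\log n)$, each summand is $\exp(-s\cdot\omega(\log n))$ and the sum is $o(n^{-1})$. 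If $s>n/2$, then also $t>n/2$, so the Chernoff exponent is $\Omega(n^2p)$, which easily dominates the trivial count $\binom{n}{s}\binom{n}{t}\leq 4^n$ and yields another $o(n^{-1})$ term.

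The main obstacle is the bookkeeping in the small-$s$ regime: the inequality $n-t\leq s-1$, forced by $s+t\geq n+1$ together with $s\leq t$, is what keeps $\binom{n}{t}$ small enough to be absorbed by the Chernoff exponent. The hypothesis $p=\omega(\log n/n)$ enters at exactly this point, ensuring that $np$ grows faster than $\log(en/s)$ uniformly over $s\geq 1$ so that the exponential factor wins.
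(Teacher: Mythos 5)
Your proposal is correct: the reduction via Hall's theorem to the non-existence of a pair $(S,T)$ with $|S|+|T|\geq n+1$ and $e_B(S,T)\leq \frac{(1-\epsilon)np}{2}\min(|S|,|T|)$, followed by Chernoff and a union bound split at $s=n/2$ (with the symmetric case $|S|>|T|$ handled by swapping sides), goes through as written and gives the stated $1-o(n^{-1})$ bound. The paper omits its proof and defers to Sudakov and Vu, whose argument for perfect-matching resilience is exactly this standard Hall-plus-concentration scheme, so your write-up is essentially the same approach, adapted to the bipartite setting.
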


The following lemma (a special case of a result of Bárány and Doerr~\cite{barany-doerr}) will
turn out to be surprisingly useful.

\begin{lemma}[Bárány and Doerr]\label{lemma:averaging}
  Let $d$ be a positive integer.
  For every finite set $A\subseteq [0,1]^d$ and every positive integer
  $m$, there is a partition of $A$ into $m$ parts $A_1,\dotsc,A_m$ such
  that for all $i\in [m]$
  \[ \sum_{a\in A_i} a \in \frac{1}{m}\sum_{a\in A}a + 3d [-1,1]^d. \]
\end{lemma}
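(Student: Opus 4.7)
The plan is to reduce the statement to Steinitz's rearrangement theorem: in the sharp form due to Grinberg and Sevastyanov, this says that if $v_1,\dotsc,v_n \in \mathbb{R}^d$ satisfy $\|v_i\|_\infty \leq 1$ and $\sum_i v_i = 0$, then the $v_i$'s can be reordered so that every prefix sum lies in $d\cdot[-1,1]^d$. Once one has such a good ordering of the elements of $A$ (after a suitable translation), the desired partition is obtained simply by cutting the reordered sequence into $m$ consecutive blocks of nearly equal size.

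Concretely, setting $n := |A|$ and $s := \sum_{a \in A} a$, I would first centre the vectors by defining $v_a := a - s/n$ for each $a \in A$. Then $v_a \in [-1,1]^d$, since both $a$ and the convex combination $s/n$ lie in $[0,1]^d$, and $\sum_{a \in A} v_a = 0$ by construction. Applying Steinitz yields an enumeration $a_1,\dotsc,a_n$ of $A$ such that every partial sum $P_k := \sum_{j\leq k} v_{a_j}$ satisfies $P_k \in d\cdot[-1,1]^d$.

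Next I would chop $\{1,\dotsc,n\}$ into $m$ consecutive intervals $I_i = \{k_{i-1}+1,\dotsc,k_i\}$ of size $|I_i| \in \{\lfloor n/m \rfloor, \lceil n/m \rceil\}$ and set $A_i := \{a_j : j \in I_i\}$. A direct computation then gives
$$\sum_{a\in A_i} a - \frac{s}{m} = (P_{k_i} - P_{k_{i-1}}) + \left(|A_i| - \frac{n}{m}\right)\frac{s}{n}.$$
The first summand lies in $2d\cdot[-1,1]^d$ by the choice of ordering, and the second in $[-1,1]^d$ because $\bigl||A_i| - n/m\bigr| \leq 1$ and $s/n \in [0,1]^d$. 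Combining these two gives the required $3d$-bound in each coordinate.

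The only non-trivial input is Steinitz's theorem itself, which is a classical result; everything else is elementary bookkeeping. The one thing to keep an eye on is the constant in Steinitz's theorem for the $\ell_\infty$-norm: the Grinberg-Sevastyanov bound of $d$ is precisely what allows the additional error of $1$ coming from the block-size rounding to be absorbed into the constant $3d$.
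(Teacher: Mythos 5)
The paper itself does not prove this lemma at all (it is quoted as a special case of the theorem of Bárány and Doerr with a citation), so there is no internal proof to compare against; your argument is a correct, self-contained derivation of the stated special case. Centring by the mean keeps the vectors $v_a=a-s/n$ in $[-1,1]^d$ and makes them sum to zero; the Grinberg--Sevastyanov form of Steinitz's theorem (constant $d$ for an arbitrary norm in $\mathbb{R}^d$, in particular for $\ell_\infty$) gives an ordering with all prefix sums in $d\,[-1,1]^d$; and cutting into $m$ consecutive blocks of size $\lfloor n/m\rfloor$ or $\lceil n/m\rceil$ yields exactly the identity you state, $\sum_{a\in A_i}a-\frac{1}{m}s=(P_{k_i}-P_{k_{i-1}})+\bigl(|A_i|-\frac{n}{m}\bigr)\frac{s}{n}$, whose two terms are bounded coordinatewise by $2d$ and $1$ respectively, and $2d+1\le 3d$ for $d\ge 1$. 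This is also essentially how the cited Bárány--Doerr result is obtained (their stronger statement controls all prefixes within each part, again via the Steinitz lemma), so your route is the natural one; your special case simply does not need the prefix control, which is why the block-cutting step suffices. Two trivial points worth a word in a write-up: the degenerate cases $n=0$ and $n<m$ (empty parts, which the lemma's applications in the paper implicitly allow) are still covered by your formula, since then $P_{k_i}-P_{k_{i-1}}=0$ and $\bigl||A_i|-n/m\bigr|<1$.
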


Finally, we shall need a result on packing nearly spanning graphs with small
components into random graphs. For brevity we will adopt the following definition.

\begin{definition}
  Let $\mathcal{BC} (n,K)$ denote the family of all graphs of order at most $n$
  in which every component has size at most $K$.
\end{definition}

The next lemma sates that graphs from $\mathcal{BC}((1-\epsilon)n,K)$ pack
asymptotically optimally into dense random graphs on $n$ vertices, and
moreover, this packing can be chosen to be random-like in the sense that fixed
subsets of the graphs are spread evenly over the vertices of $G(n,p)$. Since the proof
is rather involved, we postpone it to Section~\ref{sec:packing-small}. As a
side remark, this lemma is the only part of our proof that is not algorithmic
(in the sense that its proof does not suggest an efficient randomized algorithm
that produces the desired packing with high probability).
However, if all the components are trees, then one can use the method of~\cite{bfkl}
to obtain an algorithmic proof.

\begin{lemma}\label{lemma:bounded-gnp}
  Let $\epsilon,\alpha,\beta,K>0$ and $p\in (0,1]$ be constants and let $G\sim G(n,p)$.
  Assume that $G_1,\dotsc,G_t$ are graphs in $\mathcal{BC}((1-\epsilon)n,K)$ such that
  \[ e(G_1)+\dotsb+e(G_t)\leq (1-\epsilon)\binom{n}{2}p,\]
  where each graph $G_i$ has at least $(1-2\epsilon)n$ vertices. Moreover,
  assume that $A_1,\dotsc,A_t$ and $B_1,\dotsc,B_t$ are sets where for all $i$
  we have $A_i,B_i\subseteq V(G_i)$, $|A_i|\leq \alpha n$, and $|B_i|\leq \beta
  n$. Then a.a.s.\ there exists a packing $\{f_i\colon G_i\to G\}$ of the graphs
  $G_1,\dotsc,G_t$ into $G$ with the following properties:
  \begin{enumerate}[(1)]
    \item for every vertex $v$ of $G$ there are at most $2\alpha t$ values $i\in [t]$ such that
      $v\in f_i(A_i)$ and at most $4(\beta+\epsilon) t$ values $i\in [t]$ such that
      $v\in f_i(B_i) \cup \Coim f_i$;
    \item for any two distinct vertices $u,v$ of $G$ there are at most $5(\beta
      + \epsilon)^2t$ values $i\in [t]$ such that $\{u,v\}\subseteq
      f_i(B_i)\cup \Coim f_i$.
  \end{enumerate}
\end{lemma}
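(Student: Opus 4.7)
The plan is to construct the packing $\{f_i\}$ by a randomised iterative procedure, exploiting that each $G_i$ has components of bounded size $K$. Up to isomorphism there are only a bounded number of component types, so each $G_i$ is captured by a constant-dimensional profile vector recording the number of components of each type together with the relevant statistics of $A_i$ and $B_i$. I would use Lemma~\ref{lemma:averaging} to partition $[t]$ into groups within which these profile vectors are well-balanced, so that ``random'' later choices are automatically well-distributed over $V(G)$.

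Within each group I would pack the $G_i$'s sequentially into the residual graph $H_i := G - \bigcup_{j<i} f_j(G_j)$. For each $i$, I first choose the image $\Ima f_i \subseteq V(G)$ uniformly at random among subsets of the appropriate size (which is at least $(1-2\epsilon)n$), and then build the actual embedding by placing the components of $G_i$ inside $\Ima f_i$ one isomorphism type at a time. The latter step reduces to a bipartite matching between component slots and candidate copies in $H_i[\Ima f_i]$; since $p$ is a constant and only a $(1-\epsilon)$-fraction of the expected edges of $G$ are ever used, this auxiliary bipartite graph is still random-like and dense, so Lemma~\ref{lemma:matchings-resilience} (or a standard Hall-type argument) provides the required perfect matching.

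To establish (1) and (2) I would apply Azuma--Hoeffding (Lemma~\ref{lemma:azuma}) to the sums
\[ \sum_{i\in [t]} \mathbf 1[v\in f_i(A_i)], \quad \sum_{i\in [t]} \mathbf 1[v\in f_i(B_i)\cup\Coim f_i], \quad \sum_{i\in [t]} \mathbf 1[\{u,v\}\subseteq f_i(B_i)\cup\Coim f_i]. \]
Because $\Ima f_i$ is chosen uniformly at random, for uniform $f_i$ the conditional mean of each indicator is bounded by $\alpha$, $\beta+2\epsilon$, and $(\beta+2\epsilon)^2$ respectively, comfortably below the targets $2\alpha$, $4(\beta+\epsilon)$, and $5(\beta+\epsilon)^2$. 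The Bárány--Doerr preprocessing keeps the \emph{conditional} means close to these unconditional values throughout the iteration, and a union bound over $v$ (resp.\ pairs $\{u,v\}$) then yields the desired simultaneous bounds a.a.s.

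The main obstacle is the cascade effect: every $f_j$ removes edges from $H$, simultaneously degrading the random-like properties of the residual graph and skewing the conditional distributions that drive the concentration inequalities. Making both (a) the bipartite matching step at each iteration and (b) the Azuma--Hoeffding bounds go through simultaneously requires a delicate tracking of how the $\epsilon$-slack in the edge budget is consumed across the $t$ iterations, and it is this bookkeeping, rather than any single ingredient, that makes the argument technically involved.
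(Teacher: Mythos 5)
Your sketch leaves the central difficulty unresolved, and you say so yourself in the last paragraph: the ``cascade effect'' is not a technical bookkeeping issue that can be deferred, it is the whole problem. In a sequential scheme where $f_i$ is embedded into the residual graph $H_i = G - \bigcup_{j<i} f_j(G_j)$, the total edge budget $(1-\epsilon)\binom n2 p$ only guarantees that \emph{globally} an $\epsilon$-fraction of the edges survives; it does not prevent earlier embeddings from exhausting the edges at particular vertices or destroying almost all copies of some component type locally, unless one proves that the partial packing stays quasirandom throughout the process. Your sketch supplies no mechanism for this: the auxiliary bipartite graph between ``component slots'' and ``candidate copies'' in $H_i[\Ima f_i]$ is not distributed as $B(m,q)$ once you condition on $f_1,\dotsc,f_{i-1}$ (these embeddings are highly correlated with the edge set of $H_i$), so Lemma~\ref{lemma:matchings-resilience} does not apply to it, and no Hall-type condition is verified. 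Likewise, the concentration step assumes that, given a uniformly random image $\Ima f_i$, the sets $f_i(A_i)$ and $f_i(B_i)$ are uniformly spread inside it; but the embedding within the image is forced by the residual graph and need not be anywhere near uniform, so the claimed conditional means $\alpha$, $\beta+2\epsilon$, $(\beta+2\epsilon)^2$ are not justified. Controlling exactly these two things simultaneously is what a correct proof must do, and the Bárány--Doerr preprocessing of profile vectors does not by itself provide it.

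For contrast, the paper sidesteps the residual-graph issue entirely: it first decomposes $G(n,p)$ into $N\approx(1-\delta)np/(\ell-1)$ \emph{edge-disjoint} copies of $K_\ell\times b$ (Lemma~\ref{lemma:clique-factor}, proved via the Pippenger--Spencer theorem applied twice), assigns to each such clique factor a Bárány--Doerr-balanced group of the $G_i$, splits each $G_i$ into $b$ balanced unions of components (Bárány--Doerr again, now also balancing $A_i$ and $B_i$), packs each group of pieces into a single constant-size clique $K_\ell$ deterministically via Lemma~\ref{lemma:bounded-clique} (which rests on Theorem~\ref{thm:mrs}), and only then introduces randomness by composing with an independent uniformly random automorphism of each clique. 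Because the clique factors are edge-disjoint from the outset, no embedding ever degrades the host graph available to later embeddings, and because the randomization is a genuine uniform permutation inside each $K_\ell$, the Azuma--Hoeffding computation for (1) and (2) goes through cleanly. If you want to rescue a sequential random-greedy argument, you would need to prove a quasirandomness-maintenance statement for the residual graph (in the spirit of the later work of Kim, Kühn, Osthus, and Tyomkyn cited in the introduction), which is substantially harder than anything invoked in your sketch.
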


\section{Proof of Theorem~\ref{thm:main}}

Let $\mathcal G$ be a separable family of graphs and let positive constants
$\epsilon$ and $\Delta$ be given. Additionally, fix constants
$\gamma,\delta,\zeta,p_0,K$ such that
\[ 0 < K^{-1} \ll \delta\ll \zeta \ll \gamma \ll p_0 \ll \epsilon, \]
where the notation $a\ll b$ means that $a$ should be sufficiently small
depending on $b$. We assume throughout that $n$ is a sufficiently large
integer.

Let $G_1,\dotsc,G_t\in \mathcal G$ be graphs of order at most $n$ and maximum
degree at most $\Delta$ such that $e(G_1)+\dotsb + e(G_t) \leq
(1-\epsilon)\binom{n}{2}$. The goal is to pack these graphs into $K_n$.
Note that by combining two graphs into one if they have fewer than $n/2$
non-isolated vertices, we can assume without loss of generality that at most
one graph $G_i$ has less than $n/4$ edges, which implies that $t\leq 2n$. By
adding isolated vertices, we can also assume that each graph $G_i$ has exactly $n$
vertices.

Since $\mathcal G$ is a separable family, each graph $G_i$ contains a set
$S_i\subseteq V(G_i)$ of size $\delta n$ such that $G_i-S_i$ only has
components of size at most $K$ (note that we will consistently omit rounding
brackets throughout the proof). For each graph $G_i$ we now fix such a set $S_i$.

An independent set $I$ in a graph $G$ is said to be \emph{2-independent} if any two
vertices of $I$ have disjoint neighbourhoods in $G$. Since each $G_i$ has maximum degree
at most $\Delta$, we can greedily find a $2$-independent set $I_i$ of size $\gamma n$
in $G_i$ that is disjoint from $S_i$.

The general idea of the proof is to proceed in three phases: in the first phase
we pack all the graphs $G_i-S_i-I_i$, in the second phase we extend this packing
to a packing of the graphs $G_i-I_i$, and in the final phase, we extend that packing
to a packing of the graphs $G_i$.

\subsection{Preparations}

Let $M:= \gamma^{-2}$ and fix a collection $Z^{(1)},\dotsc,Z^{(M)}$ of $M$
disjoint subsets of $V(K_n)$ containing $\zeta n$ vertices each (this is
possible since $M\zeta n \leq n$).

Now let $p := \gamma^2(1-p_0)$ and split $K_n$ into $M+1$ edge-disjoint random
subgraphs $\Gamma^{(0)},\dotsc,\Gamma^{(M)}$ as follows. First, let $\{X_e\mid e\in
E(K_n)\}$ be jointly independent random variables distributed uniformly on the
interval $[0,1$]. Then we define $\Gamma^{(0)}$ to be the subgraph of $K_n$ with the
edge set $\{e \mid X_e \in [0,p_0]\}$, and for $k\in [M]$ we define $\Gamma^{(k)}$ to be the
subgraph with edge set $\{e\mid X_e\in (p_0 +(k-1)p, p_0 + kp]\}$.

The graphs $\Gamma^{(0)},\Gamma^{(1)},\dotsc,\Gamma^{(M)}$ partition $K_n$ into
edge-disjoint subgraphs. Moreover, one can check that $\Gamma^{(0)}$ is distributed as
$G(n,p_0)$ and that the graphs $\Gamma^{(1)},\dotsc, \Gamma^{(M)}$ are distributed as
$G(n,p)$.

Similarly, we will split our collection of graphs $G_1,\dotsc,G_t$ into $M$ groups as follows.
For each $i\in [t]$ define the vector $a_i := (1,\frac{2e(G_i)}{\Delta n})\in
[0,1]^2$, and let $A:=\{a_1,\ldots,a_t\}$. By applying
Lemma~\ref{lemma:averaging} to $A$
we obtain a partition
\[A:=\bigcup_{1\leq k\leq M} A_k\] for which
\[\sum_{a\in A_k}a\in \frac{1}{M}\sum_{a\in A}a \pm 6[-1,1]^2.\] Clearly, since
the first coordinate of each $a_i$ is $1$, it follows that $|A_k|\leq \frac
tM+6$ for all $k$. Moreover, since the second coordinate of each $a_i$ is
$\frac{2e(G_i)}{\Delta n}$, it follows that
\[ \sum_{i: a_i\in A_k}\frac{e(G_i)}{\Delta n}\in \frac{\sum_ie(G_i)}{M\Delta n}\pm 6. \]
By setting $\mathcal B^{(k)}:=\{i\in [t]\mid  a_i\in A_k\}$ we obtain a partition
\[ \{1,\dotsc,t\} = \bigcup_{1\leq k \leq M} \mathcal B^{(k)} \]
which satisfies $\max_k{|\mathcal B^{(k)}|} \leq \frac{t}{M}+6 \leq 3\gamma^2 n$ and
\begin{equation}\label{eq:edges}
  \sum_{i\in \mathcal B^{(k)}} e(G_i) \leq
  \frac{e(G_1) + \dotsb + e(G_t)}{M} + 6\Delta n \leq
  (1-\epsilon/2)\binom{n}{2}p,
\end{equation}
where we used that $e(G_1)+\dotsb+e(G_t) \leq (1-\epsilon)\binom{n}{2}$.

\subsection{Phase I: packing the $G_i-I_i-S_i$}

For each $k\in[M]$, we will pack the collection $\{G_i-I_i-S_i\mid i\in
\mathcal B^{(k)}\}$ into the graph $\Gamma^{(k)}-Z^{(k)}$. The edges of $\Gamma^{(k)}$
that are incident with $Z^{(k)}$ will only be used in the second phase. We also
note that this leaves the graph $\Gamma^{(0)}$ completely untouched; the edges
of $\Gamma^{(0)}$ are reserved for the final phase.

Our goal is to apply Lemma~\ref{lemma:bounded-gnp}. Let us first set everything up.
Since $Z^{(k)}$ was fixed beforehand, we can see that $\Gamma^{(k)}-Z^{(k)}$ is a random graph
distributed as $G((1-\zeta)n,p)$. As $\zeta \ll \gamma$ there is some $\xi>0$ so that
\[ 1-\delta-\gamma = (1-\xi)(1-\zeta). \]
In fact, as $\zeta,\delta \ll \gamma$, we have (say) $\gamma/2 \leq \xi \leq 2\gamma$.
By the definition of $S_i$ and using $v(G_i-I_i-S_i)= (1-\delta-\gamma)n$, we
then have $G_i-I_i-S_i\in \mathcal BC((1-\xi)(1-\zeta)n, K)$ and
$v(G_i-I_i-S_i) \geq (1-2\xi)(1-\zeta) n$.
Let $A_i$ be the set of neighbours of $S_i$ in $G_i-I_i-S_i$ and let $B_i$ be
the set of neighbours of $I_i$ in $G_i-I_i-S_i$. Note that $|A_i| \leq \Delta
|S_i| \leq \Delta\delta n$ and $|B_i| \leq \Delta|I_i|\leq \Delta \gamma n$.

Now we can apply Lemma~\ref{lemma:bounded-gnp} and obtain for each $k\in [M]$ a packing
\[ \{f_i\colon G_i-I_i-S_i\to \Gamma^{(k)}-Z^{(k)}\mid i \in \mathcal B^{(k)}\} \]
with the following properties (using $\max_k|\mathcal B^{(k)}|\leq 3\gamma^2 n$):
\begin{enumerate}[(P1)]
  \item for every vertex $v$ of $\Gamma^{(k)}-Z^{(k)}$ there are at most
    $6\Delta\delta \gamma^2n$ values $i\in\mathcal B^{(k)}$ such that $v\in f_i(A_i)$
    and at most $12(\Delta \gamma + \xi)\gamma^2 n \leq 50\Delta \gamma^3 n$
    values $i\in \mathcal B^{(k)}$ such that $v \in f_i(B_i)\cup
    (V(K_n)\setminus (Z^{(k)}\cup \Ima f_i))$;
  \item for any two distinct vertices $u,v,$ of $\Gamma^{(k)}-Z^{(k)}$ there are at most
    $15(\Delta \gamma +\xi)^2 \gamma^2n\leq 200 \Delta^2\gamma^4 n$ values $i\in
    \mathcal B^{(k)}$ such that $\{u,v\}\subseteq f_i(B_i) \cup (V(K_n)\setminus
    (Z^{(k)}\cup \Ima f_i))$.
\end{enumerate}

\subsection{Phase II: extending to a packing of the $G_i-I_i$}

The next step is to extend the embeddings $f_i\colon G_i-I_i-S_i\to
\Gamma^{(k)}-Z^{(k)}$ constructed above to edge-disjoint embeddings $g_i \colon
G_i-I_i\to \Gamma^{(k)}$. For this, we will use essentially the same greedy
procedure that was used in the proof of Theorem~\ref{thm:mrs}~\cite{mrs}. Fix
some $k\in [M]$. For each $i\in \mathcal B^{(k)}$, we will embed $S_i$ into
$Z^{(k)}$ using the edges of $\Gamma^{(k)}$.

First, observe that by the Chernoff bound (Lemma~\ref{lemma:chernoff}) and the
union bound, a.a.s.\ every set of at most $\Delta$ vertices of $\Gamma^{(k)}$ has
$(1+o(1))|Z^{(k)}|p^{\Delta} \geq \zeta \gamma^{2\Delta} n/2$ common neighbours in
$Z^{(k)}$. From now on, we assume that this is the case.

Now we process the values $i\in \mathcal B^{(k)}$ one by one. Fix some $i$ and
assume for an induction that we already have edge-disjoint embeddings
$\{g_j \colon G_j-I_j\to \Gamma^{(k)}\mid j\in \mathcal B^{(k)}\cap [i-1]\}$ with the
property that for each vertex $v$, there are at most $\zeta^2 n$
values $j\in \mathcal B^{(k)}\cap [i-1]$ such that $v\in g_j(S_j)$.
We show that we can also embed $S_i$ to obtain  a packing of
$\{G_j-I_j\mid j\in \mathcal B^{(k)}\cap [i]\}$ which does not violate this property.

Order the vertices of $S_i$ as $v_1,\dotsc,v_{\delta n}$. Note that the
neighbours of $S_i$ in $G_i-S_i-I_i$ are already embedded by $f_i$, and it only remains
to embed the vertices in $S_i$. We embed these vertices one by one, where we
need to embed each vertex into the common neighborhood of the images of the
neighbours that we have embedded up to that point. When doing this we never
want to use edges that have been used by an embedding of one of the graphs
$G_j-I_j$ where $j< i$.

Suppose that we have embedded $v_1,\dotsc,v_{\ell-1}$ and want to embed
$v_\ell$. Let $N_\ell$ be the set of the neighbours of $v_\ell$ that have been
embedded up to that point and let $\tilde N_\ell$ be the image of this set in $\Gamma^{(k)}$.
Since $N_\ell$ contains at most $\Delta$ vertices, we know that $\tilde N_\ell$
has at least $\zeta\gamma^{2\Delta} n/2$ common neighbours in $Z^{(k)}$ (by the
observation above). We need to embed $v_\ell$ into a common neighbour $u$ of
$\tilde N_\ell$ in $Z^{(k)}$ that satisfies the following three properties: (i) none of the
edges between $\tilde N_\ell$ and $u$ has been used by an embedding of a graph
$G_j-I_j$ where $j \in \mathcal B^{(k)}\cap [i-1]$; (ii) $u$ is not the image
of a vertex $v_1,\dotsc,v_{\ell-1}$; (iii) there are at most $\zeta^2n-1$
values $j\in \mathcal B^{(k)}\cap [i-1]$ such that $u\in f_j(S_j)$. If such a
common neighbour $u$ exists, then we can embed $v_\ell$ to $u$ and continue
with the next vertex.

We now show that such a vertex exists.
Firstly, by (P1) and by the induction hypothesis, and using $\zeta\ll \gamma$,
there exist at least $\zeta\gamma^{2\Delta} n/4$ common neighbours of $\tilde
N_\ell$ that satisfy (i). Moreover, since $\ell \leq \delta n \leq
\zeta\gamma^{2\Delta}/8$, among these there are at least
$\zeta\gamma^{2\Delta}n/8$ that also satisfy (ii). Finally, the total number of
vertices in $Z^{(k)}$ used by previous embeddings is at most $\delta n\cdot
|\mathcal B^{(k)}| \leq \delta n^2$ (counted with multiplicities), which is not
enough to cover each of those $\zeta\gamma^{2\Delta} n/8$ vertices $\zeta^2 n$
times (as $\delta \ll \zeta \ll \gamma$). So there must be a common neighbour
satisfying (i), (ii), and (iii), which can be the image of $v_\ell$.

This procedure results in packings
$\{g_i \colon G_i-I_i \to \Gamma^{(k)} \mid i\in \mathcal B^{(k)}\}$, one for each $k\in [M]$.
The union of these packings is a packing
\[ \{g_i \colon G_i-I_i\to K_n\setminus \Gamma^{(0)}\mid i\in [t]\}. \]
We finish this subsection by stating the crucial property that will allow us to
complete the packing of the graphs $G_i-I_i$ to a packing of the graphs $G_i$:

\begin{claim}\label{claim:balance}
  For every vertex $v$ of $K_n$, there are at most $\gamma^{0.9} n$ values $i\in [t]$
  such that \[ v\in g_i(N_{G_i}(I_i)) \cup \Coim g_i.\]
  For any two distinct vertices $u,v$ of $K_n$, there are at most $\gamma^{1.9} n$
  values $i\in [t]$ such that
  \[ \{u,v\} \subseteq g_i(N_{G_i}(I_i)) \cup \Coim g_i. \]
\end{claim}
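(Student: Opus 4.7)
The plan is to reduce both assertions to the Phase-I bounds (P1) and (P2), exploiting the disjointness of $Z^{(1)},\dotsc,Z^{(M)}$: any single vertex lies in at most one $Z^{(k)}$ and any pair of vertices in at most one common $Z^{(k)}$.

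The key reduction is the following. For $i\in\mathcal B^{(k)}$ the embedding $g_i$ agrees with $f_i$ on $V(G_i)\setminus S_i\setminus I_i$ and sends $S_i$ into $Z^{(k)}$, while $\Ima f_i\subseteq V(K_n)\setminus Z^{(k)}$. Writing $N_{G_i}(I_i)=B_i\cup(N_{G_i}(I_i)\cap S_i)$ and $\Ima g_i=\Ima f_i\cup g_i(S_i)$, it follows that for any $v\in V(K_n)\setminus Z^{(k)}$,
\[ v\in g_i(N_{G_i}(I_i))\cup\Coim g_i \iff v\in f_i(B_i)\cup\bigl(V(K_n)\setminus Z^{(k)}\setminus\Ima f_i\bigr), \]
because the portion of $g_i(N_{G_i}(I_i))$ coming from $S_i$ and the set $g_i(S_i)$ both lie in $Z^{(k)}$ and hence do not contain $v$. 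The right-hand side is exactly the event counted by (P1); the obvious pairwise analogue is counted by (P2).

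For statement (1), fix $v\in V(K_n)$ and let $k_0$ be the unique index (if any) with $v\in Z^{(k_0)}$. For each $k\neq k_0$ the reduction together with (P1) gives at most $50\Delta\gamma^3 n$ indices $i\in\mathcal B^{(k)}$ satisfying $v\in g_i(N_{G_i}(I_i))\cup\Coim g_i$; for $k=k_0$ the trivial bound $|\mathcal B^{(k_0)}|\leq 3\gamma^2 n$ suffices. Summing over $k\in[M]$ and using $M=\gamma^{-2}$ gives
\[ M\cdot 50\Delta\gamma^3 n+3\gamma^2 n \;=\; 50\Delta\gamma n+3\gamma^2 n \;\leq\; \gamma^{0.9}n, \]
the last inequality holding since $\gamma$ was chosen sufficiently small compared to $\Delta$. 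Statement (2) is handled the same way: at most two indices $k$ can satisfy $\{u,v\}\cap Z^{(k)}\neq\emptyset$, and for every other $k$ both vertices lie outside $Z^{(k)}$, so (P2) contributes at most $200\Delta^2\gamma^4 n$ per such $k$; for the (at most two) exceptional indices I either apply (P1) to whichever of $u,v$ lies outside $Z^{(k)}$, or fall back on $|\mathcal B^{(k)}|\leq 3\gamma^2 n$ when $u,v$ happen to share a single $Z^{(k_0)}$. The total is $O(\Delta^2\gamma^2 n)+O(\gamma^2 n)\leq\gamma^{1.9}n$ for $\gamma$ small enough.

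The main obstacle is really just the bookkeeping in the reduction step: one must chase the definitions of $f_i$ and $g_i$ carefully to confirm that, outside $Z^{(k)}$, the events appearing in Claim~\ref{claim:balance} coincide with those already controlled by (P1) and (P2). Once this is done everything else is a routine aggregation, and the disjointness of the $Z^{(k)}$'s guarantees that the few ``uncontrolled'' contributions coming from vertex/pair membership in some $Z^{(k)}$ are of strictly lower order than the target bounds $\gamma^{0.9}n$ and $\gamma^{1.9}n$.
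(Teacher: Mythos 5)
Your proof is correct and follows essentially the same route as the paper: sum the (P1)/(P2) bounds over the $M=\gamma^{-2}$ groups for the indices $k$ with $v\notin Z^{(k)}$ (resp.\ $u,v\notin Z^{(k)}$), and absorb the at most one (resp.\ two) exceptional groups via the trivial bound $|\mathcal B^{(k)}|\leq 3\gamma^2 n$. The only difference is that you spell out explicitly the identification, for $v\notin Z^{(k)}$, of the event $v\in g_i(N_{G_i}(I_i))\cup\Coim g_i$ with the event controlled by (P1)/(P2), which the paper leaves implicit.
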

\begin{proof}
  Let $F_i := g_i(N_{G_i}(I_i)) \cup \Coim g_i$.
  For a fixed $k\in [M]$ and any vertex $v\in V(K_n)\setminus Z^{(k)}$, by (P1)
  there are at most $50\Delta^3\gamma^3 n$ values $i\in \mathcal B^{(k)}$ such
  that $v \in F_i$. In total these cases contribute at most $50\Delta^3 \gamma n$ values of $i$.
  On the other hand, for any $v$, there is at most one $k\in
  [M]$ such that $v\in Z^{(k)}$ (as the $Z^{(k)}$ are disjoint sets), which contributes
  at most $\max_k |\mathcal B^{(k)}|\leq 3\gamma^2 n$ values of $i$. This gives the first statement.
  The second statement follows along the same lines, using (P2) instead of (P1).
\end{proof}

\subsection{Auxiliary graphs and local resilience}

Now that we have a packing $\{g_i \colon G_i-I_i\to K_n\setminus \Gamma^{(0)}\mid
i\in [t]\}$, we need to complete it into a packing $\{h_i \colon G_i\to
K_n \mid i\in [t]\}$. For this we will only use edges of $\Gamma^{(0)}$. However,
since the graphs are spanning, we cannot embed vertices one by one
as we did in Phase II. Luckily, the sets $I_i$ have a much nicer structure than
the sets $S_i$: they are $2$-independent sets. This will allow us
to use a method based on perfect matchings in bipartite
graphs to embed these sets.

For each $i\in [t]$, we define an auxiliary bipartite graph $B_i$ as follows:
\begin{itemize}
  \item the parts of $B_i$ are $\{ g_i(N_{G_i}(v))\mid v\in I_i\}$ and $\Coim g_i$
    (note that these are both sets of size $\gamma n$);
  \item there is an edge between $X$ and $x$ if $x$ lies in the common neighbourhood
    of $X$ in $\Gamma^{(0)}$.
\end{itemize}

Note that since $I_i$ is $2$-independent, the sets $g_i(N_{G_i}(v))$ are
disjoint for all $v\in I_i$. Observe also that it is possible to extend $g_i$
to an embedding $h_i\colon G_i\to K_n$ if and only if $B_i$ contains a perfect
matching: that is, if $x$ is matched to $X = g_i(N_{G_i}(v))$ then we can embed $v$ as
$x$. Since our goal is to extend the embeddings in an ``edge-disjoint way'', we
actually need to claim that $B_i$ is ``robust'' in some sense with respect to
the property of containing a perfect matching. For this we use the notion
of local resilience and show:

\begin{claim}\label{claim:resilience}
  The local resilience of $B_i$ with respect to containing a perfect matching
  is a.a.s.\ greater than $\gamma^{1.1} n$ for every $i\in [t]$.
\end{claim}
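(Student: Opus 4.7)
The plan is to exhibit, for each $i\in [t]$, a spanning subgraph $\tilde B_i\subseteq B_i$ that is (conditionally) distributed like $B(\gamma n, p_0^\Delta)$, and then invoke Lemma~\ref{lemma:matchings-resilience} together with a union bound over $i\in [t]$. The key observation that enables this is that the edges of $\Gamma^{(0)}$ were set aside in advance, so $\Gamma^{(0)}$ is probabilistically independent of everything used to build the embeddings $g_j$ in Phases I and II.

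For each $v\in I_i$, write $X_v := g_i(N_{G_i}(v))$; by definition of $B_i$, the vertex of the left part labelled $X_v$ is joined to $x\in \Coim g_i$ iff $x$ is a common neighbour of $X_v$ in $\Gamma^{(0)}$. Because $I_i$ is $2$-independent in $G_i$, the sets $X_v$ are pairwise disjoint; they also lie inside $\Ima g_i$ and are therefore disjoint from $\Coim g_i$. Since $\sum_{v\in I_i}|X_v|\leq \Delta\gamma n$ while $|\Ima g_i|=(1-\gamma)n$, there is ample room to greedily extend each $X_v$ to a set $\tilde X_v\subseteq \Ima g_i$ of size exactly $\Delta$ in such a way that the $\tilde X_v$ remain pairwise disjoint (and still disjoint from $\Coim g_i$). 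Define $\tilde B_i$ as the spanning subgraph of $B_i$ in which $v\in I_i$ is joined to $x\in \Coim g_i$ iff $x$ is adjacent in $\Gamma^{(0)}$ to every vertex of $\tilde X_v$. Since $\tilde X_v\supseteq X_v$, any such $x$ is automatically a common neighbour of $X_v$, so indeed $\tilde B_i\subseteq B_i$, and it suffices to bound the local resilience of $\tilde B_i$.

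Condition now on the embeddings $g_j$ ($j\in [t]$): as these depend only on $\Gamma^{(1)},\dotsc,\Gamma^{(M)}$, the graph $\Gamma^{(0)}$ is still distributed as $G(n,p_0)$, and the sets $\tilde X_v$ become fixed. For any two distinct pairs $(v,x), (v',x')\in I_i\times \Coim g_i$, the $\Gamma^{(0)}$-edges $\{x,y\}$ with $y\in \tilde X_v$ and $\{x',y'\}$ with $y'\in \tilde X_{v'}$ form disjoint sets, using that $\Coim g_i$ is disjoint from $\tilde X_v\cup \tilde X_{v'}$ and that $\tilde X_v\cap \tilde X_{v'}=\emptyset$ whenever $v\neq v'$. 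Hence the edge indicators of $\tilde B_i$ are jointly independent Bernoulli variables with parameter $p_0^\Delta$, so $\tilde B_i\sim B(\gamma n, p_0^\Delta)$. Applying Lemma~\ref{lemma:matchings-resilience} with $p = p_0^\Delta$ (a positive constant, so in particular $\omega(\log n/n)$) and $\epsilon = 1/2$ yields that the local resilience of $\tilde B_i$ exceeds $\gamma n p_0^\Delta/4$ with probability $1-o(n^{-1})$; since $\gamma\ll p_0$ we have $p_0^\Delta/4 > \gamma^{0.1}$, and this bound is at least $\gamma^{1.1} n$. A union bound over the at most $2n$ values of $i$ completes the proof. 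The main obstacle is the construction of $\tilde B_i$ with genuinely independent edge indicators, which is exactly what forces the use of the $2$-independence of $I_i$ and of the reservation of $\Gamma^{(0)}$; once $\tilde B_i$ is in place, everything else is a direct appeal to the matching-resilience lemma.
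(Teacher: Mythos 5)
Your proposal has a genuine gap, and it is exactly at the step you call the ``key observation.'' The graphs $\Gamma^{(0)},\Gamma^{(1)},\dotsc,\Gamma^{(M)}$ are \emph{not} independent: they are the parts of a random partition of $E(K_n)$ (the intervals $[0,p_0]$ and $(p_0+(k-1)p,p_0+kp]$ cover $[0,1]$), so they are pairwise edge-disjoint and in fact $\Gamma^{(0)}$ is a deterministic function of $(\Gamma^{(1)},\dotsc,\Gamma^{(M)})$ --- it consists precisely of the edges lying in none of the others. Consequently, once you condition on the embeddings $g_1,\dotsc,g_t$ (which are built from $\Gamma^{(1)},\dotsc,\Gamma^{(M)}$), every edge used in Phases I--II is revealed to be absent from $\Gamma^{(0)}$. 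For a vertex $x\in\Coim g_i$ and a set $\tilde X_v\subseteq \Ima g_i$, the pairs $\{x,y\}$ with $y\in\tilde X_v$ may well be edges of some $\Gamma^{(k')}$ used by other embeddings, and then they are certainly not edges of $\Gamma^{(0)}$. So the conditional law of $\Gamma^{(0)}$ is not $G(n,p_0)$, the edge indicators of your $\tilde B_i$ are not independent Bernoulli$(p_0^\Delta)$ variables, and the claim $\tilde B_i\sim B(\gamma n,p_0^\Delta)$ fails. Worse, because you condition on \emph{all} embeddings, the number of blocked pairs at a single right-vertex $x$ can be of order $n$ (edges at $x$ used by the roughly $t$ other embeddings across all groups), so this cannot be patched by arguing that only few edges of $\tilde B_i$ are affected.

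The correct route (the one the paper takes) is to localise the conditioning: fix $k$ with $i\in\mathcal B^{(k)}$, fix a \emph{canonical} packing depending only on $\Gamma^{(k)}$, and condition on $\Gamma^{(k)}$ alone. Then each edge outside $E(\Gamma^{(k)})$ lies in $\Gamma^{(0)}$ independently with probability $p_0/(1-p)$; equivalently $\Gamma^{(0)}=\tilde\Gamma^{(0)}\setminus\Gamma^{(k)}$ with $\tilde\Gamma^{(0)}\sim G(n,p_0/(1-p))$ independent of $\Gamma^{(k)}$. The auxiliary graph defined via $\tilde\Gamma^{(0)}$ really is a random bipartite graph $B(\gamma n,(p_0/(1-p))^\Delta)$ (here your $2$-independence observation is used, just as you intended), and Lemma~\ref{lemma:matchings-resilience} gives resilience at least $2\gamma^{1.1}n$. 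Passing back to $B_i$ removes, at each vertex, at most $\Delta\cdot\Delta(\Gamma^{(k)})\leq 2\Delta\gamma^2 n$ edges (only edges of the single graph $\Gamma^{(k)}$ interfere, and its maximum degree is a.a.s.\ at most $2np=2\gamma^2 n$), which the resilience slack absorbs, leaving resilience greater than $\gamma^{1.1}n$. Your auxiliary devices --- padding each $g_i(N_{G_i}(v))$ to a set of size exactly $\Delta$ and using monotonicity of resilience under passing to a spanning supergraph --- are harmless, but they do not address the dependence problem, which is the actual content of the claim.
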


\begin{proof}
  Fix some $k\in [M]$.
  We start the proof with an observation about the joint distribution of the
  pair $(\Gamma^{(0)},\Gamma^{(k)})$. Recall that $\Gamma^{(k)}$ is
  the graph of edges $e\in E(K_n)$ for which the uniform random variable
  $X_e\in [0,1]$ takes a value in the interval $(p_0+(k-1)p,p_0+kp]$. Thus the
  marginal distribution of $\Gamma^{(k)}$ is $G(n,p)$. Now suppose we condition
  on the value of $\Gamma^{(k)}$, that is, we expose the set of edges $e$ for
  which $X_e\in (p_0+(k-1)p,p_0+kp]$. Then, by symmetry, the remaining values
  $X_e$ for $e\nin E(\Gamma^{(k)})$ are independent random variables
  distributed uniformly in the set $[0,p_0+(k-1)p] \cup (p_0+kp,1]$. By
  definition, $\Gamma^{(0)}$ contains exactly those edges $e\in E(K_n)$ for
  which $X_e\in [0,p_0]$. Thus each edge $e\nin E(\Gamma^{(k)})$ is an edge of
  $\Gamma^{(0)}$ independently and with probability $p_0/(1-p)$. In other
  words, the conditional distribution of $\Gamma^{(0)}$ given $\Gamma^{(k)}$ is
  $G(n,p_0/(1-p))\setminus \Gamma^{(k)}$.
  This shows that to generate the pair $(\Gamma^{(0)},\Gamma^{(k)})$,
  we may proceed as follows: first sample a random graph $\Gamma^{(k)}\sim
  G(n,p)$, then sample a second random graph $\tilde \Gamma^{(0)} \sim
  G(n,p_0/(1-p))$ \emph{independently} of $\Gamma^{(k)}$, and finally let
  $\Gamma^{(0)} = \tilde \Gamma^{(0)}\setminus \Gamma^{(k)}$. This observation
  will be important in the argument below.

  Additionally, for each possible outcome of $\Gamma^{(k)}$, we can fix
  a canonical packing $\{g_i\colon G_i-I_i\to \Gamma^{(k)}\mid i\in \mathcal B^{(k)}\}$
  that depends only on $\Gamma^{(k)}$
  (provided such a packing exists; although it does a.a.s.).

  With this in mind, we can generate the auxiliary graph $B_i$ for $i\in \mathcal B^{(k)}$ as
  follows. First expose the edges of $\Gamma^{(k)}$. By the canonical choice
  of the packing, this completely determines the embedding $g_i\colon
  G_i-I_i\to \Gamma^{(k)}$. Next, sample an independent random graph
  $\tilde \Gamma^{(0)}\sim G(n,p_0/(1-p))$ as above. Let $\tilde B_i$ be defined like $B_i$ except
  that we put an edge between $X$ and $x$ if
  $x$ is a common neighbour of $X$ in $\tilde \Gamma^{(0)}$ (instead of $\Gamma^{(0)}$).
  Finally, let $B_i$ be obtained from $\tilde B_i$ by removing the edges $(X,x)$ where
  there exists $y\in X$ such that $\{y,x\}\in E(\Gamma^{(k)})$. Since $\Gamma^{(0)}=\tilde
  \Gamma^{(0)} \setminus \Gamma^{(k)}$, this definition of $B_i$ results in the same distribution
  as the one given at the beginning of this subsection.

  Because $I_i$ is a $2$-independent set and because $\tilde\Gamma^{(0)}$ is independent
  of $\Gamma^{(k)}$ (hence independent of the sets $g_i(N_{G_i}(v))$ for $v\in S_i$),
  we see that $\tilde B_i$ is distributed as a random
  bipartite graph $B(\gamma n,p_0^{\Delta}/(1-p)^\Delta)$.
  So by Lemma~\ref{lemma:matchings-resilience}, with probability $1-o(n^{-1})$ the local resilience of
  $\tilde B_i$ with respect to containing a perfect matching is greater than
  $\gamma np_0^\Delta/2\geq 2\gamma^{1.1}n$.

  Finally, $B_i$ is obtained from $\tilde B_i$ by removing at most $\Delta\cdot
  \Delta(\Gamma^{(k)})$ edges at each vertex. However, by the Chernoff bound
  (Lemma~\ref{lemma:chernoff}) and the union bound, the maximum degree of
  $\Gamma^{(k)}$ is a.a.s.\ at most $2np \leq 2\gamma^{2} n$ for each $k\in [M]$. So even after
  deleting these edges, the local resilience of $B_i$ with respect to
  containing a perfect matchings is still greater than $2\gamma^{1.1} n -
  2\Delta\gamma^2 n \geq \gamma^{1.1} n$. The claim follows by an application
  of the union bound over all $i$.
\end{proof}

\subsection{Phase III: extending to a packing of the $G_i$}

We now complete the packing $\{g_i \colon G_i-I_i\to K_n\mid i\in [t]\}$ to a
packing $\{h_i \colon G_i\to K_n \mid i\in [t]\}$ according to the following
randomized procedure. We process the auxiliary graphs $B_1,\dotsc,B_t$ one by
one. Given $i\in [t]$ let us say that a perfect matching in $B_i$ is
\emph{eligible} if it does not contain an edge $(X,x)$ such that one of the
embeddings $h_1,\dotsc,h_{i-1}$ uses an edge $\{y,x\}$ where $y\in X$. Fix a
collection of $\gamma^{1.2}n$ edge-disjoint eligible perfect matchings in
$B_i$, assuming such a collection exists; if not, the procedure fails. Extend
$g_i$ to $h_i$ by choosing a perfect matching from this collection uniformly at
random.

If it succeeds, this procedure produces a collection of edge-disjoint
embeddings $g_i \colon G_i\to K_n$ for $i\in [t]$. We only need to show that
the algorithm succeeds with positive probability. This follows from the
following claim, which completes the proof since $t= O(n)$.

\begin{claim}
  For each $i$ the following holds: the probability that the algorithm fails in step $i$ given
  that it did not fail in any previous step is at most $o(1/n)$.
\end{claim}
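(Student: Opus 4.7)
The plan is to show that, conditioning on the algorithm not failing in steps $1,\dotsc,i-1$, the number of non-eligible edges of $B_i$ incident to any fixed vertex is at most $\gamma^{1.1}n/2$ with probability $1-o(1/n)$. Given this, Claim~\ref{claim:resilience} lets us iteratively extract $\gamma^{1.2}n$ edge-disjoint eligible perfect matchings: after deleting all non-eligible edges we have removed at most $\gamma^{1.1}n/2$ edges per vertex, and each matching extracted afterwards removes one more edge per vertex, so after all $\gamma^{1.2}n$ rounds the total number of deletions per vertex is below the resilience threshold $\gamma^{1.1}n$ guaranteed by Claim~\ref{claim:resilience}.

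The core of the argument is an expectation bound for the number of non-eligible edges. Fix $x\in\Coim g_i$ (the other side of $B_i$ is symmetric). For $j<i$ let $Y_j$ denote the number of $y\in g_i(N_{G_i}(I_i))$ such that $h_j$ uses the edge $\{x,y\}$; since the sets $\{g_i(N_{G_i}(v))\mid v\in I_i\}$ are pairwise disjoint (as $I_i$ is $2$-independent), $\sum_{j<i}Y_j$ upper-bounds the number of non-eligible edges at $x$. Since $h_j$ is drawn uniformly from a collection of $\gamma^{1.2}n$ edge-disjoint matchings in $B_j$, the probability that $h_j$ uses a given edge $\{x,y\}$ is at most $2/(\gamma^{1.2}n)$, and it is zero unless $\{x,y\}\subseteq g_j(N_{G_j}(I_j))\cup\Coim g_j$. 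Summing over $y$ and applying Claim~\ref{claim:balance}(2), which bounds the number of $j$'s exhibiting this co-occurrence by $\gamma^{1.9}n$, yields the deterministic upper bound
\[
\sum_{j<i}\e\!\bigl[Y_j\bigm|h_1,\dotsc,h_{j-1}\bigr]
\,\leq\,\frac{2|g_i(N_{G_i}(I_i))|\cdot\gamma^{1.9}n}{\gamma^{1.2}n}
\,\leq\,2\Delta\gamma^{1.7}n.
\]
Since $Y_j\leq\Delta$ deterministically, Lemma~\ref{lemma:azuma} applied with the $\mu_j$'s above and $c_j=\Delta$ gives
\[
\pr\!\Bigl[\textstyle\sum_{j<i}Y_j\geq\tfrac12\gamma^{1.1}n\Bigr]
\,\leq\,\exp\!\bigl(-\Omega_\Delta(\gamma^{2.2}n)\bigr)
\,=\,o(n^{-2}),
\]
and a union bound over all $2\gamma n$ vertices of $B_i$ completes the probability estimate.

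The main obstacle is that the naive deterministic count, using only Claim~\ref{claim:balance}(1), allows up to $\Theta(\Delta^2\gamma^{0.9}n)$ non-eligible edges per vertex, which already \emph{exceeds} the available local resilience $\gamma^{1.1}n$ since $\gamma<1$. It is therefore essential to exploit the randomness in the choice of each earlier $h_j$, together with the sharper co-occurrence estimate of Claim~\ref{claim:balance}(2), in order to reduce the relevant count to the much smaller expected value $O(\Delta\gamma^{1.7}n)$. The careful tuning of the exponents $0.9,1.1,1.2,1.9$ of $\gamma$ is precisely what gives Azuma-Hoeffding enough slack to supply the required concentration.
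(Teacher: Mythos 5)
Your argument is correct and follows the same overall strategy as the paper's proof---reduce failure at step $i$ to showing that each vertex of $B_i$ is incident to at most $\gamma^{1.1}n/2$ non-eligible edges, invoke Claim~\ref{claim:resilience} to extract $\gamma^{1.2}n$ edge-disjoint eligible perfect matchings, and finish with Azuma--Hoeffding and a union bound---but you execute the key counting step differently, and more cleanly. The paper first uses the first statement of Claim~\ref{claim:balance} to restrict to the at most $\gamma^{0.9}n$ indices $j$ with $v\in F_j$, and then splits these into at most $\gamma^{1.2}n$ ``bad'' indices with $|F_i\cap F_j|>\gamma^{1.5}n$ (handled deterministically, via a pigeonhole argument based on the second statement of Claim~\ref{claim:balance}) and ``good'' indices, for which the per-step probability is at most $\gamma^{0.3}$. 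You avoid this case distinction entirely: since the $\gamma^{1.2}n$ matchings available at step $j$ are edge-disjoint in $B_j$ and a fixed pair $\{x,y\}$ corresponds to at most one edge of $B_j$ (the sets $g_j(N_{G_j}(w))$, $w\in I_j$, being disjoint), each fixed edge is used by $h_j$ with probability $O(1/(\gamma^{1.2}n))$; summing the pairwise bound of Claim~\ref{claim:balance} over all $y$ then gives the expectation bound $O(\Delta\gamma^{1.7}n)$ in one step, without using the first statement of Claim~\ref{claim:balance} at all. This is a genuine streamlining and yields an even smaller expected count than the paper's ($\gamma^{1.7}n$ versus roughly $\gamma^{1.2}n$, up to factors of $\Delta$), leaving ample room for the concentration step.

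Two points should be made explicit to make the write-up airtight. First, in the statement ``the probability that $h_j$ uses $\{x,y\}$ is at most $2/(\gamma^{1.2}n)$, and zero unless $\{x,y\}\subseteq g_j(N_{G_j}(I_j))\cup\Coim g_j$'' you must count only edges of $\Gamma^{(0)}$ (equivalently, only the newly embedded edges of $h_j$, i.e.\ those coming from the chosen matching): the sub-embedding $g_j$ uses edges of some $\Gamma^{(k)}$ with $k\geq 1$ deterministically, so for arbitrary pairs the probability bound fails. The restriction is harmless because every edge of $B_i$ corresponds to a pair lying in $\Gamma^{(0)}$, which is edge-disjoint from the graphs $\Gamma^{(k)}$, so only matching-edges of earlier steps can render a $B_i$-edge non-eligible. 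Second, the set side of $B_i$ is not literally symmetric: a vertex $X=g_i(N_{G_i}(w))$ consists of up to $\Delta$ vertices of $K_n$, so its non-eligible degree is bounded by a sum over those vertices. Either prove the per-$K_n$-vertex bound with the threshold $\gamma^{1.1}n/(2\Delta)$ (as the paper does) or run the same martingale for $X$ with $c_j=\Delta^2$; in either case your expectation bound (at most $2\Delta^2\gamma^{1.7}n$) is far below the threshold, so the argument goes through unchanged.
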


\begin{proof}
  For $j\in [t]$, let $F_j := g_i(N_{G_j}(I_j)) \cup \Coim g_j$ and recall
  that $|F_j| \leq (\Delta+1)\gamma n$.

  Fix $i$ and suppose that we construct $h_1,\dotsc,h_{i-1}$ according to the
  randomized procedure described above. Say that
  an edge $\{u,v\}$ is \emph{forbidden} if it is used by one of the embeddings
  $h_1,\dotsc,h_{i-1}$. Then it is enough to show that for each $v\in F_i$
  there are at most $\gamma^{1.1}n/(2\Delta)$ vertices $u\in F_i\setminus
  \{v\}$ such that $\{u,v\}$ is an edge of $\Gamma^{(0)}$ that is forbidden. Indeed,
  this implies that at every vertex of $B_i$, there are at most
  $\gamma^{1.1}n/2$ non-eligible edges, which by the local resilience
  (Claim~\ref{claim:resilience}) implies that there is a collection of at least
  $\gamma^{1.1}n/2 \geq \gamma^{1.2} n$ edge-disjoint perfect matchings.

  Fix some vertex $v\in F_i$.
  First of all, note that an edge $\{u,v\} \in E(\Gamma^{(0)})$
  can only be used by the embedding $h_j$ if $\{u,v\}\subseteq F_j$.
  By the first statement of Claim~\ref{claim:balance}, there are at most $\gamma^{0.9} n$ values $j<i$
  such that $v\in F_j$. Additionally, for all but at most
  $\gamma^{1.2}n$ values $j < i$ such that $v\in F_j$, we have
  and $|F_i\cap F_j| \leq \gamma^{1.5} n$. Indeed, otherwise there would be
  some $u\in F_i$ such that $\{u,v\}$ is contained in $\gamma^{2.7}n^2/|F_i| \geq \gamma^{1.7} n/(\Delta+1)$
  many sets $F_j$, contradicting the second statement of Claim~\ref{claim:balance}.

  From the $\gamma^{1.2} n$ values for which this fails, we get at most
  $\Delta\gamma^{1.2} n \leq \gamma^{1.1}n/4$ forbidden edges incident to $v$.
  For the at most $\gamma^{0.9}n$ remaining ones, recall that $h_j$
  was obtained by choosing a perfect matching uniformly at random from a set of
  $\gamma^{1.2}n$ edge-disjoint perfect matchings. So the probability that
  $h_j$ uses an edge from $v$ to a vertex $u\in F_i$ is at most
  \[ \frac{|F_i\cap F_j|}{\gamma^{1.2}n} \leq \gamma^{0.3} \]
  \emph{independently} of all previous choices. Since the maximum degree of each graph $G_j$ is at most
  $\Delta$, the Azuma-Hoeffding inequality (Lemma~\ref{lemma:azuma})
  implies that with probability
  \[ 1- e^{-\frac{\gamma^{2.4}n}{2\Delta^2}} = 1-o(n^{-2}), \] there are at most
  $2\Delta \gamma^{1.2} n \leq \gamma^{1.1}n/(2\Delta)$ forbidden edges in $\Gamma^{(0)}[F_i]$ incident to $v$.
  The claim now follows by taking the union bound over all vertices $v$.
\end{proof}

\section{Proof of Lemma~\ref{lemma:bounded-gnp}}\label{sec:packing-small}

Before we can prove Lemma~\ref{lemma:bounded-gnp}, we need to collect some
auxiliary results. Recall that $K_\ell\times b$ denotes the graph consisting of
$b$ disjoint copies of $K_\ell$. Firstly, we will need the following result, which
states that there exist asymptotically optimal packings of copies
of $K_\ell\times (1-o(1))n/\ell$ in $G(n,p)$.

\begin{lemma}\label{lemma:clique-factor}
  Let $\epsilon,p\in (0,1)$ be constants and let $\ell\in \mathbb N$ be
  sufficiently large. Let $N = (1-\epsilon)\frac{np}{\ell-1}$ and let $b =
  (1-\epsilon)\frac{n}{\ell}$. Then a graph $G\sim G(n,p)$ a.a.s.\ contains a
  family of edge-disjoint subgraphs $C_1,\dotsc,C_N$ where each graph $C_i$ is
  isomorphic to $K_\ell\times b$ and where for each vertex $v$ of $G$ there are
  at least $(1-\epsilon)N$ values $s\in [N]$ such that $v\in V(C_s)$.
\end{lemma}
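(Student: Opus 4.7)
My plan is to construct the near-factors $C_1,\dotsc,C_N$ sequentially: at step $s$, I find a near-$K_\ell$-factor $C_s$ in the residual graph $G_s := G\setminus\bigcup_{j<s}C_j$. The construction of a single $C_s$ proceeds via a R\"odl-nibble-style argument applied to the auxiliary hypergraph whose vertex set is $V(G)$ and whose hyperedges are the copies of $K_\ell$ in $G_s$: a near-perfect matching in this hypergraph corresponds to a set of $(1-o(1))n/\ell$ vertex-disjoint copies of $K_\ell$, which is exactly a near-$K_\ell$-factor. The nibble output is guaranteed by pseudo-regularity of the hypergraph (approximately uniform vertex degree, small codegrees), which in turn follows from sufficient pseudo-randomness of $G_s$. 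Alternatively, one can appeal directly to a resilience-type existence result for near-$K_\ell$-factors in pseudo-random graphs.

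The crux is then to show that $G_s$ remains pseudo-random across all $N=\Theta(np/\ell)$ iterations. Each previously chosen $C_j$ deletes exactly $\ell-1$ edges at each of its covered vertices and none at the uncovered ones. If the coverage so far has been approximately balanced, each vertex of $G_s$ has lost about $(1-\epsilon)(\ell-1)(s-1)$ edges, leaving it with degree at least $\epsilon np/2$. Analogous concentration estimates for codegrees of pairs, numbers of copies of $K_\ell$ through fixed vertices or edges, and higher-order statistics can be derived by viewing the nibble outputs as random variables and applying Azuma--Hoeffding-type martingale inequalities to the cumulative process.

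To achieve the balanced-coverage condition, I would bias the nibble step at iteration $s$ so as to prefer copies of $K_\ell$ that use currently under-covered vertices --- for instance, by assigning weights to the hyperedges of the nibble hypergraph proportional to the current coverage deficit of the $\ell$ incident vertices. Standard martingale concentration would then give that the coverage of every vertex lands within $\epsilon N$ of the average $(1-\epsilon)N$, as required. The main technical obstacle, as I see it, is controlling the drift of pseudo-randomness parameters of $G_s$: a single step of the nibble only loses sub-polynomial factors, but over $\Theta(n)$ iterations these errors can plausibly compound and eventually violate the nibble hypotheses. Handling this most likely requires formulating the entire iterative process as a single joint randomness and applying concentration at the global level, rather than re-establishing pseudo-randomness at each step in isolation.
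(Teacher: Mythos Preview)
Your approach is plausible in outline but differs substantially from the paper's, and the obstacle you yourself flag---error accumulation in the pseudo-randomness parameters of $G_s$ over $\Theta(n)$ sequential iterations---is real and remains unresolved in your sketch. The difficulty is not just degrees and codegrees: to re-run the nibble at step $s$ you need that every vertex and edge of $G_s$ lies in roughly the expected number of copies of $K_\ell$, and removing an adversarially (or even randomly) chosen near-factor does not obviously preserve these higher-order statistics uniformly. Making this rigorous would require a careful martingale analysis of the entire iterated nibble as one random process, which is doable in principle but is a project in itself.

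The paper sidesteps the iteration entirely by applying the Pippenger--Spencer chromatic-index theorem \emph{twice}, once on an edge-hypergraph and once on a vertex-hypergraph. First, it randomly colours the edges of $G$ with $(\log n)^2$ colours; in each colour class $G_i\sim G(n,p/(\log n)^2)$ it forms the hypergraph on $E(G_i)$ whose hyperedges are the edge-sets of copies of $K_\ell$, and Pippenger--Spencer gives a proper colouring of this hypergraph into $(1+o(1))\Delta$ matchings. Picking one colour class uniformly at random from each $G_i$ and taking the union yields a large collection $\mathcal C$ of \emph{edge-disjoint} copies of $K_\ell$ in $G$ that covers every vertex $(1-o(1))np/(\ell-1)$ times. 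Second, it applies Pippenger--Spencer again to the hypergraph on $V(G)$ whose hyperedges are the vertex-sets of cliques in $\mathcal C$: this has nearly regular degree and codegree at most $1$, so it decomposes into $(1+o(1))np/(\ell-1)$ matchings, each of which is a set of vertex-disjoint cliques from $\mathcal C$, i.e.\ a near-$K_\ell$-factor. A counting argument shows almost all of these matchings have size at least $b$, and each vertex is missed by only $o(N)$ of them.

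The point is that both nibble applications are made once against a fixed input with controlled parameters, so no iterative drift ever arises; the random edge-colouring plus the random choice of a colour class in each $G_i$ supply the independence needed to get uniform vertex coverage by Chernoff. Your weighted-nibble balancing trick is replaced by this two-level randomisation, which is considerably lighter.
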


We note that the lemma actually holds for values of $p$ as low as
$\operatorname{polylog} n \cdot n^{-(\ell-2)/(\binom{\ell}2 -1)}$, with much
the same proof.

The proof of Lemma~\ref{lemma:clique-factor} uses a very powerful result of
Pippenger and Spencer~\cite{pippenger-spencer}. Recall that the chromatic index
$\chi'(\mathcal H)$ of a hypergraph $\mathcal H$ is defined as the smallest
number $c$ such that there exists a colouring of the edges of $\mathcal H$ with
$c$ colours in which every colour class forms a matching (i.e., a set of
independent edges). For a hypergraph $\mathcal H$, let $\delta(\mathcal H)$ be
the minimum degree of $\mathcal H$, let $\Delta(\mathcal H)$ be the maximum
degree of $\mathcal H$, and let $\Delta_2(\mathcal H)$ be the maximum co-degree
of any two distinct vertices in $\mathcal H$. Then clearly $\chi'(\mathcal H)
\geq \Delta(\mathcal H)$. Pippenger and Spencer proved that if $\delta(\mathcal
H) \sim \Delta(\mathcal H)$ and $\Delta_2(\mathcal H) = o(\Delta(\mathcal H))$,
then this lower bound is asymptotically tight:

\begin{theorem}[Pippenger and Spencer~\cite{pippenger-spencer}]
  \label{thm:ps}
  For every $k\geq 2$ and $\delta >0$, there exist $\delta'>0$ and $n_0$ such that
  if $\mathcal H$ is a $k$-uniform hypergraph on $n\geq n_0$ vertices satisfying
  \[ \delta(\mathcal H) \geq (1-\delta')\Delta(\mathcal H) \quad \text{and}\quad
  \Delta_2(\mathcal H) \leq \delta'\Delta(\mathcal H) \]
  then
  \[ \chi'(\mathcal H) \leq (1+\delta) \Delta(H). \]
\end{theorem}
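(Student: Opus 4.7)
The plan is to prove the bound via the Rödl nibble (semi-random method), iteratively extracting near-perfect matchings that serve as colour classes. Concretely, at each round $t=1,2,\ldots$ I would extract a matching $M_t$ in the uncoloured subhypergraph $\mathcal H_t$ of $\mathcal H$ (with $\mathcal H_1:=\mathcal H$), assign it a new colour, and set $\mathcal H_{t+1}:=\mathcal H_t\setminus M_t$; once the residual hypergraph has maximum degree $o(\Delta(\mathcal H))$, a greedy edge-colouring finishes the proof.

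In each nibble step I would include each hyperedge of $\mathcal H_t$ in a random set $S_t$ independently with probability $q_t\asymp 1/D_t$ (where $D_t=\Delta(\mathcal H_t)$), and then reduce $S_t$ to a matching $M_t$ by deleting, from each pair of selected hyperedges sharing a vertex, one of the two. Because $\Delta_2(\mathcal H_t)/D_t$ stays $o(1)$ for most rounds (codegrees only decrease under hyperedge removal, and $D_t$ remains $\Theta(\Delta(\mathcal H))$ throughout), the expected number of conflicts involving any selected hyperedge is $o(1)$. A Talagrand- or Azuma-type concentration inequality then implies that a.a.s.\ $|M_t|=(1-o(1))|V(\mathcal H_t)|/k$ and every vertex is covered by $M_t$ with probability $1-o(1)$, yielding $D_{t+1}\leq D_t-1+o(1)$.

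Iterating the nibble until $D_t$ has dropped to $o(\Delta(\mathcal H))$ — say, after $T=\Delta(\mathcal H)-\sqrt{\Delta(\mathcal H)}$ rounds — uses $T$ colour classes. A greedy edge-colouring of the residual hypergraph $\mathcal H_{T+1}$ uses at most $k\cdot D_{T+1}=o(\Delta(\mathcal H))$ further colours, so combining the two stages yields $\chi'(\mathcal H)\leq T+o(\Delta(\mathcal H))=(1+o(1))\Delta(\mathcal H)\leq(1+\delta)\Delta(\mathcal H)$ once $n$ (and hence $\Delta(\mathcal H)$) is sufficiently large.

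The main obstacle will be to sustain the nibble's concentration guarantees through all $T=\Omega(\Delta(\mathcal H))$ iterations: each round injects fresh randomness that could in principle destroy the near-regularity and codegree conditions required by the next round. Pippenger and Spencer's argument handles this by coupling the nibble analysis to a martingale (or potential-function) argument that tracks the deviation of every vertex's degree from its predicted trajectory, and by choosing the parameters $\delta'$ and $q_t$ small enough that a union bound over vertices and rounds keeps the total failure probability negligible.
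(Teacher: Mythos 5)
First, note that the paper does not prove this statement at all: Theorem~\ref{thm:ps} is quoted as a black box from Pippenger and Spencer, so your proposal is being judged on its own merits rather than against an in-paper argument.

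Your sketch follows the right general philosophy (semi-random colouring plus a greedy finish), but as written it has a genuine gap at its centre, and the gap is precisely the content of the theorem. Two concrete problems. (a) With a single binomial selection at rate $q_t\asymp 1/D_t$, a vertex $v$ is left uncovered by $M_t$ with probability at least roughly $(1-q_t)^{\deg(v)}\geq e^{-1}-o(1)$, a constant, not $o(1)$; so your key claim $D_{t+1}\leq D_t-1+o(1)$ does not follow from one ``bite'' per colour. To cover each vertex with probability $1-\epsilon$ you already need the full iterated nibble (Pippenger's near-perfect matching theorem) inside each round, and even then the miss probability is a small constant $\epsilon$, not $o(1)$. (b) More seriously, the plan of peeling off one near-perfect matching per round for $T=\Delta(\mathcal H)-\sqrt{\Delta(\mathcal H)}$ rounds is inconsistent with the hypotheses the nibble needs at every round. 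The nibble requires $\delta(\mathcal H_t)\geq(1-o(1))\Delta(\mathcal H_t)$ and $\Delta_2(\mathcal H_t)=o(\Delta(\mathcal H_t))$ \emph{relative to the current maximum degree}. But the misses accumulate unevenly: nothing in the matching guarantee prevents the same $\epsilon$-fraction of vertices from being missed round after round, so after $t$ rounds the degrees can spread over an interval of length of order $\delta'\Delta+\epsilon t$, while $\Delta(\mathcal H_t)$ has dropped to about $\Delta-t$. Hence the relative regularity and the codegree ratio $\Delta_2/\Delta(\mathcal H_t)$ (where $\Delta_2$ is only bounded by $\delta'\Delta(\mathcal H)$, a quantity that does not shrink) blow up long before $D_t=o(\Delta)$; your own assertion that ``$D_t$ remains $\Theta(\Delta(\mathcal H))$ throughout'' contradicts the goal of running until $D_t=o(\Delta)$. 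Your final paragraph acknowledges exactly this obstacle but then defers its resolution to ``Pippenger and Spencer's argument'' — that is, to the theorem you are supposed to be proving. The actual proof does not extract $\Omega(\Delta)$ matchings one at a time; it colours with a large batch of colours simultaneously in each of a \emph{bounded} number of rounds (each round shrinking the uncoloured degree by a constant factor while provably preserving near-regularity and small codegrees in the residual hypergraph, via per-vertex concentration), and only then finishes greedily. Without an argument of that kind, or some other mechanism that maintains the relative regularity across rounds, the proposed iteration does not yield $(1+\delta)\Delta(\mathcal H)$ colours.
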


We will also use the following simple fact about the distribution of cliques in
sufficiently dense random graphs. We omit the proof (it is a simple application
of the Chernoff bound).

\begin{lemma}\label{lemma:clique-regularity}
  The following holds for every integer constant $\ell \geq 3$ and every
  $p = p(n)\geq (\log n)^{-3}$.
  With probability $1-o(n^{-1})$ every edge of $G(n,p)$ is
  contained in $(1\pm o(1))\binom{n}{\ell-2}p^{\binom{\ell}{2}-1}$
  copies of $K_\ell$ and any two distinct edges of $G(n,p)$ are contained in
  $o(n^{\ell-2}p^{\binom{\ell}{2}-1})$ copies of $K_\ell$.
  Similarly, with probability $1-o(n^{-1})$, every vertex of $G(n,p)$ is contained
  in $(1\pm o(1)) \binom{n}{\ell-1}p^{\binom{\ell}{2}}$ copies of $K_\ell$.
\end{lemma}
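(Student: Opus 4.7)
The plan is to reduce each of the three statements to a small number of Chernoff bounds plus a union bound. The basic identity is that the number of copies of $K_\ell$ containing a fixed vertex $v$ equals the number of copies of $K_{\ell-1}$ in the induced subgraph $G[N_G(v)]$, and the number of copies of $K_\ell$ containing a fixed edge $\{u,v\}$ equals the number of copies of $K_{\ell-2}$ in the common-neighbourhood subgraph $G[N_G(u)\cap N_G(v)]$. Both of these induced subgraphs are themselves distributed as fresh random graphs of density $p$, so the problem reduces inductively to counting cliques in a smaller random graph.

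For the vertex statement I would fix $v$. By Chernoff, $|N_G(v)|\sim\mathrm{Bin}(n-1,p)$ is $(1\pm o(1))(n-1)p$ with probability $1-\exp(-\Omega(np))=1-n^{-\omega(1)}$. Conditional on $N_G(v)$, the graph $G[N_G(v)]$ is distributed as $G(|N_G(v)|,p)$ and is independent of the edges incident to $v$. I would then invoke an auxiliary sub-lemma, proved by induction on $k\geq 2$, stating that for every sufficiently large $m$ (satisfying the constraints produced in our reductions) the number of copies of $K_k$ in $G(m,p)$ is $(1\pm o(1))\binom{m}{k}p^{\binom{k}{2}}$ with probability $1-o(n^{-2})$. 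The base case $k=2$ is the edge count, which is binomial and concentrates by Chernoff, and the inductive step applies the same neighbourhood reduction at an arbitrary vertex of the smaller random graph, then averages over that vertex. Union-bounding over the $n$ vertices of $G(n,p)$ gives the vertex statement. The edge statement is handled identically, using $|N_G(u)\cap N_G(v)|\sim\mathrm{Bin}(n-2,p^2)$ conditional on $\{u,v\}\in G$, then applying the sub-lemma with $k=\ell-2$, and union-bounding over the $\binom{n}{2}$ edges.

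For the statement about pairs of edges, a direct calculation shows that for any two distinct edges $e_1,e_2$ of $K_n$ the conditional expected number of $K_\ell$'s of $G$ containing both (given both edges are present) is $\Theta(n^{\ell-3}p^{\binom{\ell}{2}-2})$ if $|e_1\cup e_2|=3$ and $\Theta(n^{\ell-4}p^{\binom{\ell}{2}-2})$ if $|e_1\cup e_2|=4$. Both are $o(n^{\ell-2}p^{\binom{\ell}{2}-1})$ because $np\geq n(\log n)^{-3}\to\infty$. A one-sided Chernoff-type argument, applied after exposing the relevant iterated common neighbourhood (whose size is binomial in a fresh set of independent edges), combined with a union bound over the $O(n^4)$ pairs, extends this bound to the maximum over all pairs.

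The main obstacle is that the count of $K_k$ in $G(m,p)$ is a sum of \emph{dependent} indicators, so Chernoff does not apply to it verbatim. The resolution is that the inductive reduction chains together Chernoff bounds, each applied at its own level to a genuinely independent binomial random variable (a degree or an iterated common-neighbourhood size). Because $\ell$ is a fixed constant and $p\geq(\log n)^{-3}$, the relevant binomials have parameter at least $n(\log n)^{-O(1)}$, so each Chernoff step succeeds with probability $1-n^{-\omega(1)}$, which comfortably beats the loss from the union bounds over vertices, edges and pairs, and justifies calling the proof ``a simple application of the Chernoff bound''.
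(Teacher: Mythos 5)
The paper does not actually prove this lemma: it is stated with the remark that the proof is omitted because it is ``a simple application of the Chernoff bound'', so there is no official argument to compare against. Your proposal is a legitimate way of making that remark precise, and the numerology is right: the nested exposure (degree, then common neighbourhood, then the induced subgraph, which conditionally is a fresh $G(m,p)$ with $m\approx np^{j}$) reduces everything to binomial concentration, and since $\ell$ is constant and $p\geq(\log n)^{-3}$ every binomial mean that appears is at least $n(\log n)^{-3\ell}\gg\log n$, so each Chernoff step fails with probability $n^{-\omega(1)}$, which beats all the polynomial union bounds; your expectation computations for the two-edge case (exponents $\binom{\ell}{2}-2$ with $n^{\ell-3}$ resp.\ $n^{\ell-4}$) are also correct, and the required smallness only needs $np\to\infty$ and $n^2p\to\infty$. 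Two small points to tighten. First, as literally written, invoking a sub-lemma with failure probability $o(n^{-2})$ and then union-bounding over $\binom{n}{2}$ edges (or $O(n^4)$ pairs) does not yield the claimed $1-o(n^{-1})$; you need per-object failure $o(n^{-3})$ resp.\ $o(n^{-5})$, which your own closing observation ($1-n^{-\omega(1)}$ per Chernoff step) supplies, so simply state the sub-lemma with that stronger bound. Second, the whole induction can be streamlined: prove once, by Chernoff and a union bound over the $O(n^{\ell-1})$ vertex sets $S$ with $|S|\leq\ell-1$, that a.a.s.\ (with probability $1-n^{-\omega(1)}$) every such $S$ satisfies $|N_G(S)|=(1\pm o(1))np^{|S|}$; then all three counts follow deterministically by counting ordered extensions one vertex at a time (for the pair-of-edges bound one only needs the resulting upper bounds $O(n^{\ell-3}p^{\binom{\ell}{2}-3})$ and $O(n^{\ell-4}p^{\binom{\ell}{2}-6})$, which are still $o(n^{\ell-2}p^{\binom{\ell}{2}-1})$ since $np^2,n^2p^5\to\infty$). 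This avoids having to track how the $(1\pm\epsilon)$ errors and failure probabilities propagate through a $k$-level induction, but your version also works since the depth is bounded by the constant $\ell$.
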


\begin{proof}[Proof of Lemma~\ref{lemma:clique-factor}]
  Let us fix constants
  $0 < \zeta \ll \delta \ll \epsilon$
  and let $G\sim G(n,p)$. We first show that $G$ a.a.s.\ contains a large
  collection of edge-disjoint copies of $K_\ell$ such that each vertex is
  contained in at least $(1-\zeta)np/(\ell-1)$ copies.

  For this, colour each edge of $G$ with a colour chosen uniformly at random
  from $\{1,\dotsc,\log n\}$ and write $G_i$ for the graph of edges in colour
  $i$. Then $G_i \sim G(n,q)$ where $q = p/(\log n)^2 \geq (\log n)^{-3}$. By
  Lemma~\ref{lemma:chernoff} (the Chernoff bound) and the union bound, a.a.s.\
  the maximum degree of each $G_i$ is at most $(1+o(1))nq$. By
  Lemma~\ref{lemma:clique-regularity} and the union bound, a.a.s.\ each graph
  $G_i$ has the following additional properties:
  \begin{enumerate}[(i)]
    \item each vertex is contained in at least $(1-o(1))\binom{n}{\ell-1} q^{\binom{\ell}{2}}$
      copies of $K_\ell$;
    \item each edge is contained in $(1\pm o(1))\binom{n}{\ell-2} q^{\binom{\ell}{2}-1}$
      copies of $K_\ell$;
    \item any pair of distinct edges is contained in $o(n^{\ell-2} q^{\binom{\ell}{2}-1})$
      copies of $K_\ell$.
  \end{enumerate}

  For each $i$, consider the $\binom{\ell}{2}$-uniform hypergraph $\mathcal
  H_i$ whose vertices are the edges of $G_i$ and whose edges are the edge sets
  of copies of $K_\ell$ in $G_i$. Note that a matching in $\mathcal H_i$ corresponds
  to a collection of edge-disjoint copies of $K_\ell$ in $G_i$. By (ii)
  and (iii), we have \[ (1-o(1))\Delta(\mathcal H_i) \leq
  \binom{n}{\ell-2}q^{\binom{\ell}{2} -1}
  \leq (1+o(1))\delta(\mathcal H_i) \]
  and $\Delta_2(\mathcal H_i) = o(\Delta(\mathcal H_i))$ for all $i$.
  Thus by Theorem~\ref{thm:ps} the edges of each hypergraph $\mathcal H_i$ can be
  coloured with $\chi'(\mathcal H_i) = (1+o(1)) \binom{n}{\ell-2}q^{\binom{\ell}2 -1}$
  colours such that each colour class is a matching. Fix such a colouring for
  each $\mathcal H_i$.

  For a given vertex $v$ and a given colour class $M$ in the colouring of $\mathcal H_i$,
  let us write $v \cap M$ for the set of (hyper)edges in $M$ that contain an
  edge of $G_i$ incident to $v$. Since $M$ corresponds to a collection
  of edge-disjoint copies of $K_\ell$ in $G_i$, the set $v\cap M$ can also be seen as the
  set of the copies of $K_\ell$ in $M$ that contain $v$. Because $G_i$ has maximum degree at most
  $(1+o(1))nq$ we trivially have $|v\cap M| \leq (1+o(1))nq/(\ell-1)$ for any
  colour class $M$. On the other hand,
  there are fewer than $\zeta\cdot \chi'(\mathcal H_i)$ colour classes $M$ in the colouring
  of $\mathcal H_i$ such that $|v\cap M| \leq (1-\zeta)nq/(\ell-1)$: otherwise we would have
  $\sum_M |v\cap M| \leq (1-\zeta^2+o(1))\binom{n}{\ell-1}q^{\binom{\ell}{2}}$
  (where the sum is over colour classes $M$ in the colouring of $\mathcal H_i$),
  contradicting (i).

  We now construct a collection $\mathcal C_i$ of edge-disjoint copies of
  $K_\ell$ in $G_i$ as follows: choose a colour class $M_i$ in the colouring of $\mathcal H_i$
  uniformly at random and let $\mathcal C_i$ be the copies of $K_\ell$
  corresponding to the edges in $M_i$. Note that this random choice is done
  independently for each $i$. Thus, by the above considerations,  the
  probability that $|v\cap M_i|
  \leq (1-\zeta)nq/(\ell-1)$ is at most $\zeta$ -- independently of
  what happens for other $i$!
  Thus, if we let $\mathcal C := \mathcal C_1 \cup \dotsb \cup \mathcal C_{\log n}$, then
  by the Chernoff bound (Lemma~\ref{lemma:chernoff})
  and the union bound, a.a.s.\ every vertex $v$ is
  contained in at least $(1-2\zeta)np/(\ell-1)$ copies of
  $K_\ell$ in $\mathcal C$.

  Now that we have a collection $\mathcal C$ of many edge-disjoint copies of $K_\ell$, we
  use Theorem~\ref{thm:ps} a second time to find many copies of $K_\ell\times b$ in this collection.
  This time, let $\mathcal H$ be the hypergraph on $V(G)$ whose edges are the vertex sets
  of the copies of $K_\ell$ in $\mathcal C$. By the property of $\mathcal C$ obtained above,
  we have $\delta(\mathcal H) \geq (1-2\zeta)np/(\ell-1)$. On the other hand, as the maximum
  degree of $G$ is at most $(1+o(1))np$,  we have $\Delta(\mathcal H) \leq (1+o(1))np/(\ell-1)$.
  Finally, we have $\Delta_2(\mathcal H) \leq 1$. By Theorem~\ref{thm:ps}, there is a proper
  edge colouring of $\mathcal H$ with at most $(1+\delta)np/(\ell-1)$ colours. Each colour
  class corresponds to a disjoint union of copies of $K_\ell$ from $\mathcal C$.

  Since each vertex is contained in at least $(1-2\zeta)np/(\ell-1) \geq (1+\epsilon/3)N$ copies of
  $K_\ell$ in $\mathcal C$, there must be at least this many colour classes.
  The proof is completed by observing that all but at most
  $\delta np/(\ell-1) \leq \epsilon N/3$ colour classes must have size at least $(1-\delta)n/\ell \geq b$.
  Indeed, by the Handshaking Lemma, $\mathcal C$ contains at least
  $(1-2\zeta)\frac{n^2p}{\ell(\ell-1)}$
  copies of $K_\ell$, and if $\delta np/(\ell-1)$ colour classes would have size below
  $(1-\delta) n/\ell$, then there would be fewer than
  \[ \delta(1-\delta)\frac{n^2p}{\ell(\ell-1)} + (1-\delta) (1+o(1))\frac{n^2p}{\ell(\ell-1)}
  \leq (1-2\zeta)\frac{n^2p}{\ell(\ell-1)}\]
  copies. Thus there exist at least $N$ edge-disjoint copies of $K_\ell\times b$.
  Since for each vertex $v$, there are at most $(2\zeta+\delta) np/(\ell-1)
  \leq \epsilon N$ colour classes that do not contain $v$, this also gives the
  second part of the claim,
\end{proof}

Our next lemma is about packing graphs from $\mathcal{BC}((1-\epsilon)\ell,K)$ into
$K_\ell$. This is made easy for us since we can simply use Theorem~\ref{thm:mrs}. However,
more direct arguments also work; see for example~\cite{alon-yuster}.

\begin{lemma}\label{lemma:bounded-clique}
  For all constants $\epsilon\in (0,1)$ and $K>0$ there exists $\ell_0$ such
  that the following holds for all $\ell\geq \ell_0$. Any graphs
  $G_1,\dotsc,G_t\in \mathcal{BC}((1-\epsilon)\ell,K)$ such that
  $e(G_1)+\dotsb+e(G_t) \leq (1-\epsilon)\binom{\ell}{2}$
  pack into $K_\ell$.
\end{lemma}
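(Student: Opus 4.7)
The plan is to deduce this lemma directly from Theorem~\ref{thm:mrs}. Two observations establish that $\mathcal{BC}(\cdot,K)$ fits the framework of that theorem. First, any graph in $\mathcal{BC}(\cdot,K)$ is trivially $(0,K)$-separable, since its components already have size at most $K$ without removing any vertex; so $\mathcal{BC}(\cdot,K)$ is separable in the sense of Definition~\ref{def:separable}. Second, any graph in $\mathcal{BC}(\cdot,K)$ has maximum degree at most $K-1$, because each vertex lies in a component of size at most $K$.

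The aim is to apply Theorem~\ref{thm:mrs} with $\mathcal G = \mathcal{BC}(\cdot,K)$, $n=\ell$, $\Delta = K-1$, and a slightly smaller parameter $\epsilon'=\epsilon/2$. The hypothesis of the lemma immediately supplies the conditions on vertex count per graph, namely $v(G_i) \leq (1-\epsilon)\ell \leq (1-\epsilon')\ell$, and on total edge count, namely $\sum e(G_i) \leq (1-\epsilon)\binom{\ell}{2} \leq (1-\epsilon')\binom{\ell}{2}$. The only condition that is not directly provided is the bound $t \leq (1-\epsilon')\ell$ on the number of graphs.

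To enforce this bound, I would first discard the isolated vertices of each $G_i$ (they can be re-embedded arbitrarily into leftover vertices of $K_\ell$ afterwards, without affecting packability) and then combine the remaining $G_i$ via disjoint union into fewer super-graphs $H_1,\dotsc,H_s$, each still in $\mathcal{BC}(\cdot,K)$ since the family is closed under disjoint union. Viewing this as a bin-packing problem with items of sizes $v(G_i) \leq (1-\epsilon)\ell$, bins of capacity $(1-\epsilon')\ell$, and target $s \leq (1-\epsilon')\ell$: after the removal of isolated vertices each connected component has at least $2$ vertices and at least $1$ edge, so $v(G_i) \leq 2e(G_i)$ and $\sum v(G_i) \leq 2\sum e(G_i) \leq (1-\epsilon)\ell(\ell-1)$, which is strictly less than $(1-\epsilon')^2\ell^2$, the total capacity of $(1-\epsilon')\ell$ bins of size $(1-\epsilon')\ell$. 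Thus the packing fits in principle, and Theorem~\ref{thm:mrs} applied to $H_1,\dotsc,H_s$ then produces the desired packing into $K_\ell$.

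The main obstacle will be actually achieving the bin packing: a naive first-fit does not automatically meet the required bin count when many $G_i$ have order close to the maximum $(1-\epsilon)\ell$, since two such graphs cannot fit in a single bin. However, the edge budget controls the number of such large graphs---each must contain at least $v(G_i)/K$ components and hence that many edges---so a two-stage argument (placing the large $G_i$ first, each in its own bin, and then filling the remaining capacity greedily with small $G_i$), or alternatively invoking an asymptotic PTAS for bin packing, should complete the combination and thereby the deduction.
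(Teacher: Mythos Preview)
Your reduction to Theorem~\ref{thm:mrs} via disjoint unions has a genuine gap: the bin packing you describe is not always achievable, and neither of your suggested fixes rescues it. Take $K=2$, a small $\epsilon$, and let every $G_i$ be a perfect matching on exactly $(1-\epsilon)\ell$ vertices (so there are no isolated vertices to discard). Then $e(G_i) = (1-\epsilon)\ell/2$, and the edge budget $e(G_1)+\dotsb+e(G_t)\leq (1-\epsilon)\binom{\ell}{2}$ permits $t = \ell-1$ such graphs. Any two of them together have $2(1-\epsilon)\ell$ vertices, which exceeds the bin capacity $(1-\epsilon')\ell$ whenever $\epsilon<1/2$, regardless of how small you make $\epsilon'$. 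Hence every bin packing uses at least $\ell-1 > (1-\epsilon')\ell$ bins. Your two-stage argument does not help here: every graph is ``large'', and the edge lower bound $v(G_i)/K = (1-\epsilon)\ell/2$ that you invoke coincides exactly with $e(G_i)$, so the edge budget really does allow $\ell-1$ large graphs. An asymptotic PTAS for bin packing is equally powerless, since $\mathrm{OPT}=\ell-1$ in this instance.

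The paper's proof circumvents this by \emph{overlaying} graphs edge-disjointly rather than merely placing them side by side. After a first phase that merges pairs of very sparse graphs by disjoint union (as you do), it runs a second phase: whenever three graphs each have between $(1-\epsilon)\ell/4$ and $(1-\epsilon)3\ell/4$ edges, it chops each into $r\approx (1-\epsilon/2)\ell/(CK)$ groups of components of size about $CK$, and for each $m\in[r]$ it packs the three corresponding groups edge-disjointly into a single copy of $K_{CK}$ (using Theorem~\ref{thm:mrs} at constant scale). The disjoint union over $m$ is a single graph on at most $(1-\epsilon/2)\ell$ vertices with more than $(1-\epsilon)3\ell/4$ edges, carrying all three original graphs. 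This drives the collection down to $s\leq (1-\epsilon)\ell$. The idea you are missing is that reducing the number of graphs requires superimposing several of them on the \emph{same} vertex set, not just concatenating their vertex sets.
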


\begin{proof}
  We can assume that $\epsilon$ is sufficiently small.
  Let $C = C(\epsilon)$ be a sufficiently large constant.
  Note that if $t\leq (1-\epsilon)\ell$, then the lemma follows immediately from
  Theorem~\ref{thm:mrs}.

  The idea is to transform the collection $G_1,\dotsc,G_t$
  into a smaller collection $\tilde G_1,\dotsc,\tilde G_s$
  with the following properties: (1) at most four graphs $\tilde G_i$ have
  fewer than $(1-\epsilon)3\ell/4$ edges; (2) each graph $\tilde G_i$ is an
  edge-disjoint union of one or more unique graphs $G_j$ (here \emph{unique} means
  that the assignment of
  graphs $G_j$ to graphs $\tilde G_i$ is a function);
  (3) $\tilde G_i$ has at most $(1-\epsilon/2) \ell$ vertices;
  (4) the maximum size of a component in  $\tilde G_i$ is at most $C K$.

  Suppose that we can do this. Then by (2) we have
  \[ e(\tilde G_1)+\dotsb + e(\tilde G_s)  = e(G_1)+\dotsb +e(G_t) \leq (1-\epsilon)\binom{\ell}{2}\]
  Together with (1) this implies easily that $s\leq (1-\epsilon) \ell$. Now by (3--4)
  and Theorem~\ref{thm:mrs}, the graphs $\tilde G_1,\dotsc,\tilde G_s$
  pack into $K_\ell$ (assuming that $\ell$ is large enough). By (2) this corresponds to a
  packing of the graphs $G_1,\dotsc,G_t$ and we are done. It remains to show
  how to obtain $\tilde G_1,\dotsc,\tilde G_s$.

  For this we use the following procedure, which operates in two phases. Let
  $\tilde G_1,\dotsc,\tilde G_s$ denote the current collection, where initially
  $s=t$ and $\tilde G_i = G_i$. By adding isolated vertices, assume also that
  $v(\tilde G_i) = (1-\epsilon) \ell$ for each $i$.

  In the first phase, we repeat the following as long as there are
  at least two graphs $\tilde G_i$ and $\tilde G_j$ with at most
  $(1-\epsilon)\ell/4$ edges. Given two such graphs, we note that they both have at least
  $(1-\epsilon)\ell/2$ isolated vertices. Therefore, by taking their disjoint union
  and removing isolated vertices, we obtain a graph on at most $(1-\epsilon)\ell$
  vertices. This new graph still has maximum component size at most $K$.

  We end up with a collection $\tilde G_1,\dotsc,\tilde G_s$ in which at most
  one graph has at most $(1-\epsilon) \ell/4$ edges. In the second phase, as
  long as there are at least three graphs $\tilde G_i, \tilde G_j,\tilde G_k$
  that have strictly more than $(1-\epsilon) \ell/4$ and at most
  $(1-\epsilon)3\ell/4$ edges, we replace these three graphs by a single graph
  as follows.

  Let $r:= (1-\epsilon/2)\ell/(CK)$. If $C$ is sufficiently large,
  then we can greedily group the components of $\tilde
  G_i$ (all of which have size bounded by $K$) into graphs
  $H_{i1},\dotsc,H_{ir}$ where each graph contains at most
  $(1-\epsilon/4)CK$ vertices. Similarly, we obtain graphs
  $H_{j1},\dotsc,H_{jr}$ and $H_{k1}, \dotsc,H_{kr}$ for $\tilde G_j$ and $\tilde G_k$.
  If $C$ is large enough, then for each $m\in [r]$ there exists a graph $H_m$
  on at most $CK$ vertices that is the edge-disjoint union of $H_{im},H_{jm},H_{km}$ (for this
  one can for example use Theorem~\ref{thm:mrs}, although much simpler arguments also work).
  Then we replace $\tilde G_i, \tilde G_j,\tilde G_k$ by the disjoint union
  $H := H_1+\dotsc+H_r$. Note that $H$ has strictly more than $(1-\epsilon)3\ell/4$ edges, so
  it will not be touched again; moreover, $v(H)\leq (1-\epsilon/2) \ell$ and every
  component in $H$ has size at most $CK$.
  This completes the proof of the lemma.
\end{proof}

We can now prove Lemma~\ref{lemma:bounded-gnp}.

\newtheorem*{replemma}{Lemma~\ref{lemma:bounded-gnp}}

\begin{replemma}
  Let $\epsilon,\alpha,\beta,K>0$ and $p\in (0,1]$ be constants and let $G\sim G(n,p)$.
  Assume that $G_1,\dotsc,G_t$ are graphs in $\mathcal{BC}((1-\epsilon)n,K)$ such that
  \[ e(G_1)+\dotsb+e(G_t)\leq (1-\epsilon)\binom{n}{2}p,\]
  where each graph $G_i$ has at least $(1-2\epsilon)n$ vertices. Moreover,
  assume that $A_1,\dotsc,A_t$ and $B_1,\dotsc,B_t$ are sets where for all $i$
  we have $A_i,B_i\subseteq V(G_i)$, $|A_i|\leq \alpha n$, and $|B_i|\leq \beta
  n$. Then a.a.s.\ there exists a packing $\{f_i\colon G_i\to G\}$ of the graphs
  $G_1,\dotsc,G_t$ into $G$ with the following properties:
  \begin{enumerate}[(1)]
    \item for every vertex $v$ of $G$ there are at most $2\alpha t$ values $i\in [t]$ such that
      $v\in f_i(A_i)$ and at most $4(\beta+\epsilon) t$ values $i\in [t]$ such that
      $v\in f_i(B_i) \cup \Coim f_i$;
    \item for any two distinct vertices $u,v$ of $G$ there are at most $5(\beta + \epsilon)^2t$ values
      $i\in [t]$ such that $\{u,v\}\subseteq f_i(B_i)\cup \Coim f_i$.
  \end{enumerate}
\end{replemma}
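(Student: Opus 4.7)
The plan is to apply Lemma~\ref{lemma:clique-factor} to decompose $G\sim G(n,p)$ into many edge-disjoint copies of $K_\ell\times b$ (with $\ell$ a large constant), distribute the graphs $G_1,\dotsc,G_t$ evenly among these factors using Lemma~\ref{lemma:averaging}, and then pack each group of graphs into its clique factor by packing one piece into each of the $b$ constituent cliques via Lemma~\ref{lemma:bounded-clique}. A uniformly random relabelling of the vertices inside each clique, independently across cliques, together with standard concentration inequalities, will yield the balanced packing properties~(1) and~(2).

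Concretely, I would first pick an auxiliary constant $\epsilon'\ll \epsilon$ (say $\epsilon'=\epsilon^3$) and a large integer $\ell$ so that Lemma~\ref{lemma:bounded-clique} applies on $K_\ell$ with parameter $\epsilon/4$. Apply Lemma~\ref{lemma:clique-factor} with parameter $\epsilon'$ to obtain a.a.s.\ edge-disjoint copies $C_1,\dotsc,C_N$ of $K_\ell\times b$ in $G$, where $N=(1-\epsilon')np/(\ell-1)$ and $b=(1-\epsilon')n/\ell$, with each vertex of $G$ in at least $(1-\epsilon')N$ of the $C_s$. Next, apply Lemma~\ref{lemma:averaging} to vectors carrying (after normalization to $[0,1]^5$) the coordinates $(1,\,e(G_i),\,|A_i|,\,|B_i|,\,n-v(G_i))$ to partition $[t]=\mathcal B_1\sqcup\dotsb\sqcup\mathcal B_N$, balancing each coordinate up to $O(1)$ additive error on the normalized scale. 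Then for each factor $C_s$ and each $i\in \mathcal B_s$, apply Lemma~\ref{lemma:averaging} a second time to the components of $G_i$ (similarly normalized in $[0,1]^4$) to split $G_i$ into $b$ pieces $G_i^1,\dotsc,G_i^b$, one per clique of $C_s$, balanced in vertex/edge/$A_i$/$B_i$ counts with error $O(K^2)$ per piece. The two rounds of balancing ensure that for every clique $K^{(s)}_j$ the collection $\{G_i^j : i\in \mathcal B_s\}$ satisfies the size and edge hypotheses of Lemma~\ref{lemma:bounded-clique}, so we can pack them into $K^{(s)}_j$; composing with a uniformly random permutation of the $\ell$ clique vertices (independently for each clique) and glueing yields the embeddings $f_i\colon G_i\to G$.

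Property~(1) then follows by a standard Chernoff-plus-union-bound argument: for a fixed vertex $v$, the balancing and random permutations make the expected number of $i$ with $v\in f_i(A_i)$ equal $(1+o(1))\alpha t$, while the $\Coim$ contribution to $\{i : v\in f_i(B_i)\cup \Coim f_i\}$ is at most $(\beta+O(\epsilon))t$ (coming from the $\leq \epsilon' N$ factors missing $v$ and from unused vertices inside factors containing $v$). The main obstacle is property~(2): for a pair $u,v$, any factor $s$ with both $u,v\notin V(C_s)$ forces $\{u,v\}\subseteq \Coim f_i$ for \emph{every} $i\in \mathcal B_s$, a contribution that is linear in $\epsilon$ whereas the target bound $5(\beta+\epsilon)^2 t$ is quadratic. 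This is precisely why we chose $\epsilon'=\epsilon^3\ll \epsilon$: since $u$ alone is missed by at most $\epsilon' N$ factors, the ``both outside'' contribution is at most $\epsilon^3 t\leq (\beta+\epsilon)^2 t$, while the factor capacity $(1-\epsilon')^2\binom{n}{2}p$ still accommodates the edge budget $(1-\epsilon)\binom{n}{2}p$ for small $\epsilon$. In all remaining cases, at least one of $u,v$ lies inside $V(C_s)$: either the events for $u$ and $v$ involve different cliques and are independent across the random permutations, or they involve the same clique where the ``bad'' set has size $O((\beta+\epsilon)\ell)$; in either situation Chernoff produces a contribution of order $(\beta+\epsilon)^2 t$, and the union bound over all $\binom{n}{2}$ pairs succeeds provided the individual failure probability is $o(1/n^2)$.
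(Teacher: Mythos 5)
Your proposal follows essentially the same route as the paper's proof: a clique-factor decomposition of $G(n,p)$ via Lemma~\ref{lemma:clique-factor}, two rounds of B\'ar\'any--Doerr balancing (Lemma~\ref{lemma:averaging}), packing the balanced pieces clique-by-clique via Lemma~\ref{lemma:bounded-clique}, and then composing with independent uniform permutations of each clique before concluding (1) and (2) by concentration and union bounds. The only differences are cosmetic: the paper balances $A_i,B_i$ only at the component-splitting stage and simply takes its clique-factor parameter ``sufficiently small'', whereas your explicit choice $\epsilon'\ll\epsilon^2$ (to control pairs lying outside a factor) makes precise the care the paper leaves implicit.
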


\begin{proof}
  Let $\ell$ be a sufficiently large constant and let $\delta>0$ be a sufficiently small constant.
  Let $N = (1-\delta)\frac{np}{\ell-1}$ and $b = (1-\delta)\frac{n}{\ell}$.
  By Lemma~\ref{lemma:clique-factor}, $G$ a.a.s.\ contains a collection of $N$
  edge-disjoint copies of $K_\ell\times b$ such that each vertex is contained
  in at least $(1-\delta)N$ copies. Let us denote by
  \[ F_s = F_{s1} + \dotsb + F_{sb} \subseteq G \]
  the $s$-th copy of $K_\ell\times b$ in this collection,
  where each graph $F_{sj}$ is a copy of $K_\ell$.

  By applying Lemma~\ref{lemma:averaging} to the sequence of vectors
  $(1,\frac{e(G_i)}{K n})\in [0,1]^2$, there exists a partition
  \[ \{1,\dotsc,t\} = \mathcal B_1 \cup \dotsb \cup\mathcal B_N \]
  which satisfies $|\mathcal B_s| = t/N\pm 6$ and
  \begin{equation}\label{eq:edges2}
    \sum_{i\in \mathcal B_s} e(G_i) = \frac{e(G_1) + \dotsb + e(G_t)}{N} \pm 6K n
    \leq (1-\epsilon/2)\frac{n(\ell-1)}2
  \end{equation}
  for all $s$, where the upper bound holds if $\ell$ is sufficiently large.

  The idea is to pack all the graphs $G_i$ for $i\in \mathcal B_s$ into the graph
  $F_s$. Since $F_s$ is isomorphic to $K_\ell\times b$, we want to
  write each such $G_i$ as a disjoint union of subgraphs of graphs $H_{i1},\dotsc,H_{ib}$.
  With this in mind, fix some $i\in [t]$ and let $\mathcal C_i$
  be the set of components of $G_i$. With
  each component $C\in \mathcal C_i$ we associate a vector
  \[ v_C = \Big(\frac{v(C)}{K}, \frac{e(C)}{K^2}, \frac{|V(C)\cap A_i|}{K},
  \frac{|V(C)\cap B_i|}{K}\Big)\in [0,1]^4. \]
  By Lemma~\ref{lemma:averaging} there exists a partition of $\mathcal C_i$ into $b$ sets
  $\mathcal C_{i1},\dotsc, \mathcal C_{ib}$ such that for each $j\in [b]$ we have
  \[ \sum_{C\in \mathcal C_{ij}} v_C \in \frac{1}{b}\Big(\frac{v(G_i)}{K},
  \frac{e(G_i)}{K^2}, \frac{|A_i|}{K},
  \frac{|B_i|}{K}\Big) + 12[-1,1]^4. \]
  Note that by letting $H_{ij}$ be the subgraph containing exatly the components $C\in \mathcal C_{ij}$,
  this partition corresponds to a way of writing $G_i$ as a disjoint union
  \begin{equation}\label{eq:decomp}
    G_i = H_{i1} + \dotsb + H_{ib}
  \end{equation}
  of graphs satisfying the following properties, where the upper bound in each
  case follows from the definition of $b$, the fact that $\ell$ is a
  sufficiently large constant, and from the bounds
  $|A_i| \leq \alpha n$ and $|B_i| \leq \beta n$:
  \begin{enumerate}[(i)]
    \item $v(H_{ij}) = \frac{v(G)}{b} \pm 12K \in \ell - [5/2, 1/2]\cdot \epsilon \ell$;
    \item $e(H_{ij}) = \frac{e(G_i)}{b} \pm 12K^2 \leq (1+\epsilon/2)e(G_i)\ell/n$;
    \item $|V(H_{ij})\cap A_i| = \frac{|A_i|}{b} \pm 12K
      \leq 1.5\alpha \ell$;
    \item $|V(H_{ij})\cap B_i| = \frac{|B_i|}{b} \pm 12K
      \leq 1.5\beta \ell$.
  \end{enumerate}

  Using Lemma~\ref{lemma:bounded-clique} it is now easy to see that for each $s$ and $j$,
  the family $\{H_{ij} \mid i \in \mathcal B_s\}$ packs into $F_{sj}$ (which is
  isomorphic to $K_\ell$): the condition on the number of vertices is (i), the
  condition on the number of edges follows from~\eqref{eq:edges2} and (ii),
  and each component of $H_{ij}$ has size at most $K$.

  Therefore we can fix edge-disjoint embeddings $\{ f_{ij} \colon H_{ij}\to
  F_{sj}\mid i\in \mathcal B_s \}$. For each $s$ and $j$, let $\sigma_{sj}$ be
  an automorphism of $F_{sj}$ chosen independently and uniformly at random
  among the $\ell!$ choices (which simply amounts to a random permutation of
  the vertices of $F_{sj}$). Clearly if for $i\in \mathcal B_s$ we set $g_{ij}
  := \sigma_{sj}\circ f_{ij}$ then each set $\{ g_{ij} \colon H_{ij}\to
  F_{sj}\mid i\in \mathcal B_s \}$ is also a packing. For each $i$, let
  $g_i\colon G_i \to G$ be the embedding obtained by combining the partial
  embeddings $g_{i1},\dotsc,g_{ib}$. Then the collection $\{g_i\mid i\in [t]\}$
  is a packing of the graphs $G_i$ into $G$.

  Since each vertex is contained in at least $(1-\delta)N$ graphs $F_s$, and since
  the embeddings in each graph $F_{sj}$ are random, it is easy to check (using the Azuma-Hoeffding
  and union bounds) that (1) and (2) are a.a.s.\ satisfied for all vertices.
\end{proof}

\section*{Acknowledgements}

We would like to thank Rajko Nenadov for useful discussions in the early stages of this project.

\end{document}